\newtheorem{theorem}{Theorem}[section]
\newtheorem{lemma}[theorem]{Lemma}
\newtheorem{proposition}[theorem]{Proposition}
\newtheorem{definition}[theorem]{Definition}
\newtheorem{assumption}[theorem]{Assumption}
\newtheorem{remark}[theorem]{Remark}
\newtheorem{example}{Example}
\newcommand{\XX}{{\cal X}}
\newcommand{\YY}{{\cal Y}}
\newcommand{\ZZ}{{\cal Z}}
\newcommand{\RR}{\mathbb R}
\newcommand{\Rmnum}[1]{\uppercase\expandafter{\romannumeral #1}} 
\titleformat{\chapter}{\centering\Huge\bfseries}{Chapter \Rmnum{\thechapter} }{1em}{} 
\newcommand{\VI}{\mbox{VI}}
\newcommand{\sol}{{\mbox{\rm Sol}}}
\newcommand{\proj}{{\mbox{\rm Proj}}}
\newcommand{\half}{0.5}
\newcommand{\T}{{\footnotesize\mbox{th}}}
\newcommand{\strongsol}{\sol(\VI(F;\XX))}
\newcommand{\weaksol}{\sol_m(\VI(F;\XX))}
\begin{document}
\title{ Beyond Monotone Variational Inequalities:\\
Solution Methods and Iteration Complexities } 

\author{
Kevin Huang\thanks{Department of Industrial and System Engineering, University of Minnesota, huan1741@umn.edu}
\hspace{1cm}
Shuzhong Zhang\thanks{Department of Industrial and System Engineering, University of Minnesota, zhangs@umn.edu}
}

\date{\today}
\maketitle

\begin{abstract}
    In this paper, we discuss variational inequality (VI) problems without monotonicity from the perspective of convergence of projection-type algorithms. In particular, we identify existing conditions as well as present new conditions that are sufficient to guarantee convergence. The first half of the paper focuses on the case where a Minty solution exists (also known as {\it Minty condition}), which is a common assumption in the recent developments for non-monotone VI. The second half explores alternative sufficient conditions that are different from the existing ones such as monotonicity or Minty condition, using an {\it algorithm-based} approach. Through examples and convergence analysis, we show that these conditions are capable of characterizing different classes of VI problems where the algorithms are guaranteed to converge.
    
    \vspace{3mm}

    \noindent\textbf{Keywords:} variational inequality, Minty solution, non-monotone VI, projection-type method.
\end{abstract}

\section{Introduction}
Let $\XX \subseteq \RR^n$ be a convex set, and $F(x): \XX \mapsto \RR^n$ be a vector mapping. The following problem is known as the {\it variational inequality problem}\/ (VI):
\[
\mbox{ Find $x^* \in \XX$ such that $F(x^*)^\top (x-x^*) \ge 0$ for all $x\in \XX$}.
\] 
The study of finite-dimensional VI problems dates back to 1960's where the complementarity problem was developed to solve for various equilibria, such as economic equilibrium, traffic equilibrium, and in general Nash equilibrium. For a comprehensive study of the applications, theories and algorithms of VI, readers are referred to the celebrated monograph by Facchinei and Pang \cite{facchinei2007finite}.

We say a problem is a {\it monotone} VI if the operator $F$ is monotone:
\begin{eqnarray}
\langle F(x)-F(y),x-y\rangle\ge\mu\|x-y\|^2,\quad\forall x,y\in\mathcal{X}\label{strong-monotone}
\end{eqnarray}
for some $\mu\ge0$. If there exists some $\mu>0$ such that \eqref{strong-monotone} holds, then $F$ is referred to as {\it strongly monotone}. 
Most contemporary researches on VI have focused on designing algorithms for monotone problems, such as modified forward-backward method \cite{tseng2000modified}, mirror-prox method \cite{nemirovski2004prox}, dual-extrapolation method \cite{nesterov2007dual, nesterov2006solving}, hybrid proximal extra-gradient method \cite{monteiro2010complexity}, OGDA~\cite{mokhtari2019unified, mokhtari2020convergence}, and extra-point method \cite{huang2021unifying}. These are also known as ``projection-type'' methods, while the earliest projection-type methods date back to (gradient) projection method by Sibony~\cite{sibony1970methodes}, proximal method by Martinet~\cite{martinet1970breve}, and extra-gradient method by Korpelevich \cite{korpelevich1976extragradient}. The convergence of these methods for monotone VI is studied by Tseng~\cite{tseng1995linear}. The aforementioned methods are all {\it first-order} projection methods. Recently there are also research on developing {\it high-order} projection methods and establishing their global convergence iteration complexities. They include~\cite{huang2020cubic, monteiro2012iteration, bullins2020higher, ostroukhov2020tensor, nesterov2006cubicVI, jiang2022generalized, adil2022optimal, lin2022perseus, huang2022approximation}.

The convergence of the above first-order and high-order (projection-type) methods and the corresponding iteration complexities are analyzed in the framework of monotone VI (or more generally, pseudo-monotone VI, which will be formally defined in the next section). For non-monotone VI, earlier research has developed non-projection-type methods such as the KKT condition based methods and merit function based methods (see \cite{facchinei1998regularity, qi1997semismooth, peng1999hybridgeneralVI, facchinei2007finite} and the references therein). However, it is in general difficult to establish iteration complexity for non-projection-type methods for non-monotone VI. In recent years, research on developing algorithms for non-monotone VI has focused on the VI problems where the so-called {\it Minty solutions} exist. A Minty solution to VI is a solution $x^*$ where the following inequality is satisfied:
\[
\langle F(x),x-x^*\rangle\ge0
\]
for all $x\in\XX$. When the constraint set is a close convex set and the operator $F$ is continuous and monotone, all solutions to the VI (if any) are Minty solutions. The existence of Minty solutions turns out to be critical in establishing convergence for the projection-type methods for non-monotone VI, and there have been recent developments of such results; see e.g.~\cite{lin2022perseus, song2020optimistic, burachik2020projection, ye2022infeasible, lei2021extragradient}.

In this paper, we follow this line of research on the convergence of projection-type methods for non-monotone VI. We start from the common assumption made in the literature, that is, a Minty solution exists. We show that a general extra-gradient-type method, the ARE update proposed in~\cite{huang2022approximation}, converges in a guaranteed rate with a similar convergence behavior as Perseus in~\cite{lin2022perseus}. In addition, we are interested in the concept of Minty solution itself, especially in the non-monotone setting where a VI solution is not necessarily a Minty solution. Therefore, we investigate implications given by the Minty solutions in different problem classes such as optimization and Nash games. Finally, we explore the possibilities of alternative sufficient conditions for convergence of projection-type methods through an {\it algorithm-based} approach. Conventionally, algorithms are devised to ensure convergence under a given problem framework, such as monotone VI or VI with Minty solutions, and the convergence behavior is analyzed within the framework. In this paper, we follow an opposite direction by deriving sufficient conditions for convergence based on the algorithms we are interested in. In other words, for a given algorithm, we aim to identify VI with specific structures where the algorithm is guaranteed to converge to a solution. It turns out that this approach makes it possible to characterize structures of VI models that are different from commonly encountered conditions such as the monotonicity or the Minty condition. We present several conditions of this kind and demonstrate examples as well as proving convergence of gradient projection methods and extra-gradient methods under these conditions.

The rest of the paper is organized as follows. Section~\ref{sec:non-mono-Minty} starts the discussion with non-monotone VI with Minty solution. We first provide formal definitions of the solution concepts and merit functions that are relevant to the discussion in this paper. Then the convergence of ARE is established, followed by discussions on implications of Minty solutions in optimization and Nash games. Section~\ref{sec:non-mon-no-minty} explores algorithm-based sufficient conditions, starting with formal definitions and examples. Convergence of gradient-projection method and extra-gradient method are established under these conditions. Finally, Section~\ref{sec:conclusion} concludes the paper 
with some further remarks.

\section{Non-monotone VI with Minty Solution}
\label{sec:non-mono-Minty}

\subsection{Definitions and solution concepts}

\subsubsection{VI solutions and Minty solutions}\label{sec:sol-VI-Minty}

In order to set the background for the discussion in this paper, let us first formally define the variational inequality problem and its solution set. For a given set $\XX\subseteq\RR^n$ and a continuous mapping $F:\: \XX\mapsto \RR^n$, consider the following VI model, to be denoted by $\VI(F;\XX)$:
\[
\begin{array}{ll}
\mbox{Find} & x^* \in \XX \\
\mbox{such that} & \langle F(x^*), x-x^*\rangle \ge 0,\,\, \forall x \in \XX.
\end{array}
\]
Let the solution set of the above model be $\sol(\VI(F;\XX))$. It is also referred to as the set of \textit{strong solutions}, or simply the \textit{solutions}, to the VI model. The non-emptiness of $\sol(\VI(F;\XX))$ can be guaranteed by imposing some assumptions on the basic problem structure.

\begin{assumption}\label{ass:problem-structure}
$F$ is a continuous mapping. $\XX$ is non-empty, convex and compact.
\end{assumption}

Assumption~\ref{ass:problem-structure} ensures that $\sol(\VI(F;\XX))\neq \emptyset$ \cite{facchinei2007finite, harker1990finite}, and we shall make this assumption throughout the paper. In addition to the (strong) solutions to the VI, there is another important solution concept, which is the so-called \textit{Minty solutions} or \textit{weak solutions}, defined as the set of $x^*$ such that
\[
 \langle F(x), x-x^*\rangle \ge 0,\,\, \forall x \in \XX.
\]
Let the set of Minty solutions to $\VI(F;\XX)$ be denoted by $\sol_m(\VI(F;\XX))$. The well-known Minty's Lemma states the follwing:
\begin{lemma}[Minty's Lemma]
    \label{lem:Minty}
    If $F$ is continuous, $\XX$ is non-empty, closed and convex, then $\weaksol\subseteq \strongsol$.
\end{lemma}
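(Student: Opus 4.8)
The plan is to take an arbitrary element $x^* \in \weaksol$ and show directly that it satisfies the strong VI inequality, i.e. $\langle F(x^*), x - x^* \rangle \ge 0$ for every $x \in \XX$. Fix such an $x$. The key device is to exploit the Minty inequality not at $x$ itself but along the segment joining $x^*$ to $x$: for $t \in (0,1]$ set $x_t = (1-t)x^* + t x$. Convexity of $\XX$ guarantees $x_t \in \XX$, so $x_t$ is an admissible test point in the defining inequality of a weak solution.

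Next I would substitute $x_t$ into the Minty condition to obtain $\langle F(x_t), x_t - x^* \rangle \ge 0$, and then simplify the second argument. Since $x_t - x^* = t(x - x^*)$, bilinearity of the inner product gives $t \, \langle F(x_t), x - x^* \rangle \ge 0$. Because $t > 0$, I can divide through to get $\langle F(x_t), x - x^* \rangle \ge 0$ for all $t \in (0,1]$, with the vector $x - x^*$ now held fixed and independent of $t$.

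The final step is the limiting argument: let $t \to 0^+$, so that $x_t \to x^*$. Here continuity of $F$ is exactly what is needed, yielding $F(x_t) \to F(x^*)$ and hence, by continuity of the inner product in its first argument, $\langle F(x^*), x - x^* \rangle \ge 0$. Since $x \in \XX$ was arbitrary, $x^*$ satisfies the strong VI inequality and therefore lies in $\strongsol$, establishing the claimed inclusion.

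I expect the only genuinely delicate point to be the passage to the limit, which is where the continuity hypothesis on $F$ is indispensable; the rest is a routine convex-combination manipulation. It is worth noting that the argument uses only convexity of $\XX$ and continuity of $F$, and in particular does not require closedness or any monotonicity of $F$. This asymmetry is the reason the reverse inclusion $\strongsol \subseteq \weaksol$ generally fails in the non-monotone setting, which is precisely the phenomenon motivating the later sections of the paper.
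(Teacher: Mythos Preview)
Your argument is correct and is precisely the classical proof of Minty's Lemma: test the weak inequality along the segment $x_t = (1-t)x^* + t x$, cancel the factor $t>0$, and pass to the limit using continuity of $F$. Note that the paper does not actually supply a proof of this lemma---it is stated as a known result---so there is no alternative approach to compare against; your remark that closedness of $\XX$ is not used in this direction is also accurate.
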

If additionally $F$ is monotone, then $\weaksol = \strongsol$. Indeed, for every $x^*\in\strongsol$, we have:
\[
\langle F(x),x-x^*\rangle\ge \langle F(x^*),x-x^*\rangle\ge0,
\]
thus $x^*\in\weaksol$. In this paper, while we always assume the non-emptiness of $\strongsol$ (by Assumption~\ref{ass:problem-structure}), we extend the discussion to the broader class of VI where $F$ is not necessarily monotone. Alternatively, we focus on the Minty solutions and assume $\weaksol\neq\emptyset$ in this section, and we discuss other conditions in Section~\ref{sec:non-mon-no-minty} where 
no Minty solutions exist.

\subsubsection{Merit functions}
\label{sec:merit-function}

In the context of strongly monotone VI where the solution $x^*$ uniquely exists, it is common to use (squared) distance to the solution $\|x-x^*\|^2$ in the iteration complexity analysis. For VI problems that are merely monotone, there are two other merit functions that are widely used, known as the {\it gap function} and the {\it dual gap function}. While monotonicity is not assumed in this paper, we may still use these two merit functions as the measurement of convergence. In this section we re-introduce them in a fashion that relates them with the two solution concepts, VI solutions $\strongsol$ and Minty solutions $\weaksol$, introduced earlier.

\begin{proposition}
Suppose that $\XX$ is compact. It holds that
\[
\sol(\VI(F;\XX)) \not= \emptyset \Longleftrightarrow \min_{y\in \XX} \max_{x\in \XX} \langle F(y),y-x\rangle =0.
\]
\end{proposition}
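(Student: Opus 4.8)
The plan is to prove the biconditional by establishing each direction separately, using the compactness of $\XX$ to justify that the inner maximum and outer minimum are attained, so the quantity $\min_{y\in\XX}\max_{x\in\XX}\langle F(y),y-x\rangle$ is well-defined as a real number. Define the dual gap function $g(y):=\max_{x\in\XX}\langle F(y),y-x\rangle$. The first observation I would record is that $g(y)\ge 0$ for every $y\in\XX$, since taking $x=y$ in the maximand gives the value $0$, so the maximum is at least $0$. Consequently $\min_{y\in\XX}g(y)\ge 0$ always holds, and proving the equality to $0$ amounts to exhibiting (or ruling out) a point $y$ at which $g(y)=0$.

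For the forward direction ($\Rightarrow$), suppose $x^*\in\sol(\VI(F;\XX))$. By definition of a strong solution, $\langle F(x^*),x-x^*\rangle\ge 0$ for all $x\in\XX$, which is exactly the statement that $\langle F(x^*),x^*-x\rangle\le 0$ for all $x\in\XX$, i.e. $g(x^*)=\max_{x\in\XX}\langle F(x^*),x^*-x\rangle\le 0$. Combined with the nonnegativity $g(x^*)\ge 0$ noted above, this forces $g(x^*)=0$, and hence $\min_{y\in\XX}g(y)=0$.

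For the reverse direction ($\Leftarrow$), suppose $\min_{y\in\XX}g(y)=0$. Since $\XX$ is compact and $F$ is continuous, $g$ is continuous and attains its minimum at some $y^*\in\XX$, so $g(y^*)=0$. This means $\max_{x\in\XX}\langle F(y^*),y^*-x\rangle=0$, equivalently $\langle F(y^*),y^*-x\rangle\le 0$ for all $x\in\XX$, which rearranges to $\langle F(y^*),x-y^*\rangle\ge 0$ for all $x\in\XX$. That is precisely the condition for $y^*\in\sol(\VI(F;\XX))$, so the solution set is non-empty.

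The only point requiring care — and the step I would flag as the main obstacle, though it is mild — is the reverse direction's use of the fact that the outer minimum is actually attained, which relies on continuity of $g(y)=\max_{x\in\XX}\langle F(y),y-x\rangle$ in $y$. This follows from compactness of $\XX$ together with continuity of $F$ (the maximum of a continuous function over a compact set depends continuously on parameters, e.g. via Berge's maximum theorem or a direct uniform-continuity argument), so that a minimizer $y^*$ exists and the value $0$ is genuinely realized rather than merely approached in the limit. I would state this continuity explicitly since it is exactly where the compactness hypothesis on $\XX$ is consumed; the algebraic rearrangements in both directions are otherwise routine rewritings of the defining inequalities for the two solution concepts.
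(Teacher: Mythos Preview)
Your proof is correct and follows essentially the same approach as the paper: both establish $g(y)\ge 0$ by evaluating at $x=y$, then handle $\Rightarrow$ by plugging in a solution $x^*$ and $\Leftarrow$ by taking a minimizer $y^*$ and reading off the VI condition. Your treatment is in fact slightly more careful than the paper's, since you explicitly justify (via continuity of $g$ from compactness of $\XX$ and continuity of $F$) that the outer minimum is attained, whereas the paper simply asserts the existence of the argmin; one small terminological slip is that what you call the ``dual gap function'' is actually what the paper denotes $G$ and calls the \emph{gap function}.
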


\begin{proof}
Observe that for any $y\in \XX$, we always have
\[
\max_{x\in \XX} \langle F(y),y-x\rangle \ge \langle F(y),y-y\rangle = 0.
\]
Hence, $\min_{y\in \XX} \max_{x\in \XX} \langle F(y),y-x\rangle \ge 0$, or $\max_{y\in \XX} \min_{x\in \XX} \langle F(y),x-y\rangle \le 0$.

$\Longrightarrow:$ Choose any $x^* \in \sol(\VI(F;\XX))$. We have $\min_{x\in \XX} \langle F(x^*),x-x^*\rangle =0$, implying
\[
\max_{y\in \XX} \min_{x\in \XX} \langle F(y),x-y\rangle = 0.
\]

$\Longleftarrow:$ Let
\[
y^* \in \mbox{\rm arg} \max_{y\in \XX} \left[\min_{x\in \XX} \langle F(y),x-y\rangle\right].
\]
It follows that $\min_{x\in \XX} \langle F(y^*),x-y^*\rangle=0$, or equivalently put
\[
\langle F(y^*),x-y^*\rangle \ge 0, \mbox{ for any } x \in \XX.
\]
Hence, $y^* \in \sol(\VI(F;\XX))$.
\end{proof}

In a similar vein, we have:
\begin{proposition}
Suppose that $\XX$ is compact. It holds that
\[
\sol_m(\VI(F;\XX)) \not= \emptyset \Longleftrightarrow \max_{x\in \XX} \min_{y\in \XX} \langle F(y),y-x\rangle =0.
\]
\end{proposition}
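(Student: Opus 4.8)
The plan is to mirror the argument used for the preceding proposition, with the roles of the inner minimization and maximization interchanged, since the Minty condition is precisely the quantifier-dual of the strong-solution condition. First I would record the elementary baseline inequality: for every fixed $x \in \XX$, the choice $y = x$ gives $\langle F(x), x - x \rangle = 0$, so $\min_{y\in\XX}\langle F(y), y-x\rangle \le 0$, and hence $\max_{x\in\XX}\min_{y\in\XX}\langle F(y), y-x\rangle \le 0$. This pins down one half of the desired equality and reduces the two implications to matching the value $0$ from the appropriate side.

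For the forward direction, I would take any $x^* \in \sol_m(\VI(F;\XX))$. By the definition of a Minty (weak) solution, $\langle F(y), y - x^*\rangle \ge 0$ for all $y \in \XX$, so $\min_{y\in\XX}\langle F(y), y-x^*\rangle \ge 0$; together with the baseline inequality evaluated at $x^*$ this forces $\min_{y\in\XX}\langle F(y), y-x^*\rangle = 0$. Consequently $\max_{x\in\XX}\min_{y\in\XX}\langle F(y), y-x\rangle \ge 0$, and combined with the baseline upper bound this yields the claimed value $0$.

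For the reverse direction, I would set $h(x) := \min_{y\in\XX}\langle F(y), y-x\rangle$ and pick a maximizer $x^* \in \arg\max_{x\in\XX} h(x)$. Its existence is where compactness enters: $\langle F(y), y-x\rangle$ is jointly continuous in $(x,y)$ and $\XX$ is compact, so by the standard parametric-minimum (Berge) argument $h$ is continuous and attains its maximum on $\XX$; alternatively one notes that $h$ is upper semicontinuous as a pointwise infimum of the affine-in-$x$ maps $x \mapsto \langle F(y), y-x\rangle$, which already suffices for a maximizer. The hypothesis gives $h(x^*) = 0$, i.e. $\min_{y\in\XX}\langle F(y), y-x^*\rangle = 0$, which is exactly $\langle F(y), y-x^*\rangle \ge 0$ for all $y \in \XX$; hence $x^*$ is a Minty solution and $\sol_m(\VI(F;\XX)) \ne \emptyset$.

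I do not expect a genuine obstacle, as the statement is symmetric to the previous proposition. The only point meriting care is the attainment of the outer maximum in the reverse direction, since the inner minimization can in principle destroy continuity of the objective; but because the integrand is jointly continuous and $\XX$ is compact, continuity (or at least upper semicontinuity) of $h$ is guaranteed, so the maximizer exists and the argument closes.
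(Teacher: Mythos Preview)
Your proposal is correct and follows essentially the same argument as the paper: the baseline inequality, the forward direction via a Minty solution witnessing value $0$, and the reverse direction via a maximizer of $h$. The only difference is that you justify the existence of the maximizer through upper semicontinuity/Berge, whereas the paper simply writes ``Let $x^* \in \arg\max_{x\in\XX}[\cdots]$'' without further comment; this is an added detail rather than a different approach.
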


\begin{proof}
First, observe that in general, we have
\[
\max_{x\in \XX} \min_{y\in \XX} \langle F(y),y-x\rangle \le 0
\]
because for any given $x\in \XX$, it follows that $\min_{y\in \XX} \langle F(y),y-x\rangle \le \langle F(x),x-x\rangle = 0$.

$\Longrightarrow:$ Choose any $x^* \in \sol_m(\VI(F;\XX))$. Since $\langle F(y),y-x^*\rangle \ge 0$ for all $y \in \XX$, we have
\[
\min_{y\in \XX} \langle F(y),y-x^*\rangle = 0,
\]
implying $\max_{x\in \XX} \min_{y\in \XX} \langle F(y),y-x\rangle =0$.

$\Longleftarrow:$ Let
\[
x^* \in \mbox{\rm arg} \max_{x\in \XX} \left[\min_{y\in \XX} \langle F(y),y-x\rangle \right].
\]
We have $\min_{y\in \XX} \langle F(y),y-x^*\rangle =0$, or equivalently
\[
\langle F(y),y - x^*\rangle \ge 0, \mbox{ for all } y \in \XX,
\]
implying $x^* \in \sol_m(\VI(F;\XX))$.
\end{proof}

The above analysis naturally leads to the following notions of merit functions:
\[
G(x):=\max_{y\in \XX} \langle F(x),x-y\rangle,
\]
also known as the {\it gap function}, and
\[
H(x) : = \max_{y\in \XX} \langle F(y),x-y\rangle, 
\]
also known as the {\it dual gap function}.

Based on our analysis, we have:

\begin{proposition} {\ }

\begin{itemize}
\item $G(x) \ge 0$ for all $x\in \XX$, and $G(x)=0$ if and only if $x\in \sol(\VI(F;\XX))$.
\item $H(x) \ge 0$ for all $x\in \XX$, and $H(x)=0$ if and only if $x\in \sol_m(\VI(F;\XX))$.
\end{itemize}
\end{proposition}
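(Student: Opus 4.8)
The plan is to treat both bullet points in parallel, since each reduces to the same two-step argument: first establish nonnegativity by a trivial feasible substitution into the maximization, then characterize the zero set by unpacking what it means for the maximum to vanish. Throughout I would invoke Assumption~\ref{ass:problem-structure} to guarantee that the relevant maxima are attained: for the gap function the inner expression $y \mapsto \langle F(x), x-y\rangle$ is affine in $y$, and for the dual gap function $y \mapsto \langle F(y), x-y\rangle$ is continuous by continuity of $F$; since $\XX$ is compact, both maxima over $y\in\XX$ are achieved, so equating them to zero is meaningful.

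For nonnegativity, the key observation in each case is to plug in the feasible point $y=x$. For the gap function this gives $\langle F(x), x-x\rangle = 0$, whence $G(x) = \max_{y\in\XX}\langle F(x), x-y\rangle \ge 0$; identically, for the dual gap function $\langle F(x), x-x\rangle = 0$ yields $H(x) \ge 0$. This is exactly the same device already used in the two preceding propositions to bound the min-max and max-min quantities.

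For the characterization of the zero set of $G$, I would argue that $G(x)=0$ is equivalent to $\langle F(x), x-y\rangle \le 0$ for every $y\in\XX$, which upon rearrangement reads $\langle F(x), y-x\rangle \ge 0$ for all $y\in\XX$, precisely the defining condition of $x\in\strongsol$. The forward direction uses that $G(x)=0$ forces the maximum to be nonpositive; the reverse direction uses that the VI-solution inequality makes the maximum $\le 0$, which combined with $G(x)\ge 0$ gives equality. For $H$ the same reasoning applies with the arguments swapped: $H(x)=0$ is equivalent to $\langle F(y), x-y\rangle\le 0$ for all $y\in\XX$, i.e. $\langle F(y), y-x\rangle\ge 0$ for all $y\in\XX$, which is exactly the definition of a Minty solution $x\in\weaksol$.

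There is no substantive obstacle here; the proposition is essentially a restatement of the definitions of the two solution concepts together with the two immediately preceding characterizations, and I would expect the write-up to be short. The only points demanding a moment of care are the attainment of the maxima, which is where compactness of $\XX$ and continuity of $F$ enter, and the consistent tracking of the sign conventions when passing between $\langle F(\cdot), x-y\rangle$ and $\langle F(\cdot), y-x\rangle$.
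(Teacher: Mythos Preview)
Your proposal is correct and matches the paper's approach: the paper does not give a standalone proof but simply writes ``Based on our analysis, we have:'' and states the proposition, meaning it treats the result as an immediate consequence of the arguments in the two preceding propositions (the $y=x$ substitution for nonnegativity and the direct unpacking of the max being zero), which is exactly what you spell out.
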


Therefore, we may introduce the following notion of $\epsilon$-solutions.

\begin{definition}\label{def:epsilon-sol}
For $\epsilon>0$, we call $x$ to be an $\epsilon$-VI solution if $G(x) \le \epsilon$; we call $x$ to be an $\epsilon$-Minty solution if $H(x) \le \epsilon$.
\end{definition}

\subsubsection{Relaxation of monotonicity}
\label{sec:relax-monotone}

We remark that there are several conditions can be made on the structure of $F$, under which the (pure) monotonicity is relaxed but the connections between $\strongsol$ and $\weaksol$ 
still exist. 
While we do not assume most of these conditions, we summarize them below for 
the benefit of easy referencing. 
In particular, we only consider the {\it Minty condition}\/ among others, which simply states $\weaksol\neq\emptyset$.

\begin{itemize}
    \item Weak Sharpness:
    \begin{eqnarray}
    \langle F(x^*),x-x^*\rangle\ge \mu\|x-x^*\|^2,\quad\forall x\in\XX,\quad x^*\in\sol(\VI(F,\XX)),\nonumber
    \end{eqnarray}
    for some $\mu\ge0$.
    \item Pseudo-monotonicity:
    \begin{eqnarray}
    \langle F(y),x-y\rangle\ge0\Longrightarrow \langle F(x),x-y\rangle\ge0,\quad\forall x,y\in\XX.\nonumber
    \end{eqnarray}
    \item Strongly pseudo-monotonicity:
    \begin{eqnarray}
    \langle F(y),x-y\rangle\ge0\Longrightarrow \langle F(x),x-y\rangle\ge\mu\|x-y\|^2,\quad\forall x,y\in\XX,\nonumber
    \end{eqnarray}
    for some $\mu>0$.
    \item Quasi-monotonicity:
    \begin{eqnarray}
    \langle F(y),x-y\rangle>0\Longrightarrow \langle F(x),x-y\rangle\ge0,\quad\forall x,y\in\XX.\nonumber
    \end{eqnarray}
    \item Minty's condition: $\weaksol\neq\emptyset$, i.e.\ there exists $x^*\in\XX$ such that
    \begin{eqnarray}
    \langle F(x),x-x^*\rangle\ge0,\quad\forall x\in\XX.\nonumber
    \end{eqnarray}
    \item Strong Minty's condition (Generalized monotonicity): there exists $x^*\in\XX$ such that
    \begin{eqnarray}
    \langle F(x),x-x^*\rangle\ge\mu\|x-x^*\|^2,\quad\forall x\in\XX,\nonumber
    \end{eqnarray}
    for some $\mu\ge0$.
\end{itemize}

A few remarks are in place to specify some implications given by the above conditions.

\begin{remark}{\ }

\begin{itemize}
    \item If $F$ is pseudo-monotone, then $\strongsol\subset\weaksol$. If further $F$ is continuous, $\XX$ is nonempty, closed and convex, then $\strongsol=\weaksol$.
    \item If $\XX$ is closed and bounded, then $\weaksol\neq\emptyset$ if and only if $F$ is quasi-monotone \cite{he2017solvability}.
    \item Assume $\strongsol\neq\emptyset$, then the following relations hold:
\[
\mbox{monotone $\Longrightarrow$ pseudo-monotone $\Longrightarrow$ Minty's condition}
\]
and
\[
\mbox{strongly monotone $\Longrightarrow$ strongly pseudo-monotone $\Longrightarrow$ strong Minty's condition}
\]
\end{itemize}

\end{remark}



\subsection{Convergence of projection-type methods}
\label{sec:are-conv}

In this section, we present a solution method of projection type that can be shown to converge to Minty solutions by simply assuming $\weaksol\neq\emptyset$. This method, known as {\it Approximation-based Regularized Extra-gradient method}, or simply ARE, was a general $p^\T$-order method of extra-gradient type, proposed in \cite{huang2022approximation} originally for solving monotone VI with convergence rate $\mathcal{O}(N^{-\frac{p+1}{2}})$. It turns out that ARE not only solves monotone VI, but also solves non-monotone VI as long as $\weaksol\neq\emptyset$. A similar technique in the analysis has been used in \cite{lin2022perseus} to show a different projection-type method---Perseus---can converge to Minty solutions for non-monotone VI at the same rate as to be developed later in this paper. 

The ARE update proceeds as follows:
\begin{eqnarray}
\left\{
\begin{array}{ccl}
x^{k+0.5} &:=& \VI_{\XX} \left(  \tilde F(x;x^k) + L_p \|x-x^k\|^{p-1} (x-x^k) \right) , \\
x^{k+1} &:=& \mbox{arg}\min\limits_{x \in \XX} \, \langle F(x^{k+0.5}) , x - x^k \rangle + \frac{L_p\|x^{k+0.5}-x^k\|^{p-1}}{2} \|x-x^k\|^2,
\end{array}
\right.\label{are-update}
\end{eqnarray}
for $k=1,2,...$, where $L_p$ is the Lipschitz constant for $\nabla^{p-1}F(x)$ satisfying the condition
\begin{eqnarray}
\|\nabla^{p-1}F(x)-\nabla^{p-1}F(y)\|\le L_p\|x-y\|,\label{p-lipschitz}
\end{eqnarray}
$\tilde F(\cdot;y):\RR^n\mapsto\RR^n$ is a general approximation mapping estimated at $y$ satisfying the bound:
\begin{eqnarray}
\|\Tilde{F}(x;y)-F(x)\|\le \tau L_p\|x-y\|^p,\label{approx-oper-bound}
\end{eqnarray}
and we use the notation $\VI_\XX(F)$ to denote solving for a solution in $\strongsol$ as a subroutine in the update. In the ARE update~\eqref{are-update}, the subroutine at iteration $k$ specifically solve the VI model associated with the regularized approximation operator $\tilde F(x;x^k) + L_p \|x-x^k\|^{p-1} (x-x^k)$.

The initial steps in the analysis will follow the exact same logic as the proof in~\cite{huang2022approximation} for Theorem 3.1. By the definition of $x^{k+\half}$, we have
\begin{equation} \label{subVI}
\langle \tilde F(x^{k+\half};x^k) + L_p \|x^{k+\half}-x^k\|^{p-1} (x^{k+\half}-x^k) , x - x^{k+\half}\rangle \ge 0,\,\, \forall x \in \XX.
\end{equation}
Denote $\gamma_k=L_p \|x^{k+\half}-x^k\|^{p-1}$.
Substituting $x=x^{k+1}$ in \eqref{subVI} we have
\begin{eqnarray}
&&\langle \tilde F(x^{k+\half};x^k)  ,  x^{k+1} - x^{k+\half}  \rangle \nonumber\\
&\ge& \gamma_k \langle x^{k+\half} - x^k , x^{k+\half} - x^{k+1}  \rangle \nonumber \\
&=& \frac{\gamma_k}{2} \left( \|x^{k+\half} - x^k\|^2 + \|x^{k+1}-x^{k+\half}\|^2 - \| x^{k+1}-x^k\|^2 \right) . \label{tildeF}
\end{eqnarray}

On the other hand, by the optimality condition at $x^{k+1}$ we have
\[
\langle F(x^{k+\half})  + \gamma_k (x^{k+1} - x^k) , x - x^{k+1} \rangle \ge 0 ,\mbox{ for all $x\in \XX$}.
\]
Hence,
\begin{eqnarray}
\langle F(x^{k+\half})  , x - x^{k+1} \rangle &\ge& \gamma_k \langle x^{k+1} - x^k, x^{k+1} - x \rangle  \nonumber \\
&=& \frac{\gamma_k}{2} \left(  \| x^{k+1} - x\|^2 + \| x^{k+1} - x^k\|^2 - \| x^k - x\|^2 \right) ,\, \mbox{ for all $x\in \XX$}. \label{F ineq}
\end{eqnarray}

Continue with the above inequality, for any given $x\in \XX$ we have
\begin{eqnarray*}
& & \frac{\gamma_k}{2} \left(  \| x^{k+1} - x\|^2 + \| x^{k+1} - x^k\|^2 - \| x^k - x\|^2 \right) \\
&\overset{\eqref{F ineq}}{\le} & \langle F(x^{k+\half})  , x - x^{k+1} \rangle \\
&=&  \langle F(x^{k+\half})  , x - x^{k+\half} \rangle + \langle F(x^{k+\half})  , x^{k+\half} - x^{k+1} \rangle \\
&=& \langle F(x^{k+\half})  , x - x^{k+\half} \rangle + \langle F(x^{k+\half}) - \tilde F(x^{k+\half};x^k) , x^{k+\half} - x^{k+1} \rangle + \langle \tilde F(x^{k+\half};x^k)  , x^{k+\half} - x^{k+1} \rangle \\
&\le& \langle F(x^{k+\half})  , x - x^{k+\half} \rangle + \| F(x^{k+\half}) - \tilde F(x^{k+\half};x^k) \| \cdot \| x^{k+\half} - x^{k+1} \|
+ \langle \tilde F(x^{k+\half};x^k)  , x^{k+\half} - x^{k+1} \rangle \\
&\le & \langle F(x^{k+\half})  , x - x^{k+\half} \rangle + \frac{\| F(x^{k+\half}) - \tilde F(x^{k+\half};x^k)\|^2}{2\gamma_k}  + \frac{\gamma_k \|  x^{k+\half} - x^{k+1} \|^2}{2}\\
& & + \langle \tilde F(x^{k+\half};x^k)  , x^{k+\half} - x^{k+1} \rangle \\
&\overset{\eqref{approx-oper-bound}}{\le} & \langle F(x^{k+\half})  , x - x^{k+\half} \rangle + \frac{\tau^2 L_p^2 \| x^{k+\half} - x^k\|^{2p} }{2\gamma_k}  + \frac{\gamma_k \|  x^{k+\half} - x^{k+1} \|^2}{2}\\
& & + \langle \tilde F(x^{k+\half};x^k)  , x^{k+\half} - x^{k+1} \rangle .
\end{eqnarray*}
Noticing that $\frac{\tau^2 L_p^2 \| x^{k+\half} - x^k\|^{2p}}{2\gamma_k}=\frac{\tau^2 \gamma_k \| x^{k+\half} - x^{k} \|^2}{2}$, and further using \eqref{tildeF} we derive from the above that
\begin{eqnarray*}
& & \frac{\gamma_k}{2} \left(  \| x^{k+1} - x\|^2 + \| x^{k+1} - x^k\|^2 - \| x^k - x\|^2 \right) \\
&\le& \langle F(x^{k+\half})  , x - x^{k+\half} \rangle + \frac{\tau^2 \gamma_k \| x^{k+\half} - x^{k} \|^2}{2} + \frac{\gamma_k \|  x^{k+\half} - x^{k+1} \|^2}{2} \\
&   & + \frac{\gamma_k}{2} \left[ - \|x^{k+\half} - x^k\|^2 - \|x^{k+1}-x^{k+\half}\|^2 + \| x^{k+1}-x^k\|^2 \right] .
\end{eqnarray*}
Canceling out terms, we simplify the above inequality into
\begin{equation} \label{iter}
\langle F(x^{k+\half})  , x^{k+\half} - x \rangle + \frac{\gamma_k}{2} \left(1-\tau^2 \right) \| x^{k+\half} - x^{k} \|^2 \le
\frac{\gamma_k}{2} \left[ \|x^{k} - x\|^2 - \|x^{k+1}-x\|^2  \right] .
\end{equation}

In the original analysis in~\cite{huang2022approximation}, the rest of the proof continues with the monotonicity of $F$. In this analysis, we assume the Minty condition (i.e.\ $\weaksol\neq\emptyset$) holds instead of monotonicity of $F$. Taking any fixed $x=x^*\in\weaksol$ in the above inequality, we have:
\begin{eqnarray}\label{conv-to-Minty}
\left(1-\tau^2 \right) \| x^{k+0.5} - x^{k} \|^2 \le \left[ \|x^{k} - x^*\|^2 - \|x^{k+1}-x^*\|^2  \right],
\end{eqnarray}
since $\langle F(x^{k+0.5}),x^{k+0.5}-x^*\rangle\ge0$. Summing up this inequality for $k=1,...,N$ gives us:
\begin{eqnarray}
\min\limits_{1\le k\le N}\|x^{k+0.5}-x^k\|^2\le \frac{1}{N}\sum\limits_{k=1}^N\|x^{k+0.5}-x^k\|^2\le\frac{1}{N(1-\tau^2)}\|x^1-x^*\|^2.\label{conv-of-iterate}
\end{eqnarray}

Condition~\eqref{subVI} for updating $x^{k+0.5}$ implies for all $x\in\XX$, we have:
\begin{eqnarray}
\langle\tilde F(x^{k+0.5};x^k),x^{k+0.5}-x\rangle&\le& -L_p\|x^{k+0.5}-x^k\|^{p-1}(x^{k+0.5}-x^k)^\top(x^{k+0.5}-x)\nonumber\\
&\le& L_pD\|x^{k+0.5}-x^k\|^p,\nonumber
\end{eqnarray}
where $D:=\max\limits_{x,x'\in\mathcal{X}}\|x-x'\|$.

Denote $k_N:=\arg\min\limits_{1\le k \le N}\|x^{k+0.5}-x^k\|^2$, we have:
\begin{eqnarray}
&&\langle F(x^{k_N+0.5}),x^{k_N+0.5}-x\rangle\nonumber\\
&=& \langle F(x^{k_N+0.5})-\tilde{F}(x^{k_N+0.5};x^{k_N}),x^{k_N+0.5}-x\rangle+\langle \tilde F(x^{k_N+0.5};x^{k_N}),x^{k_N+0.5}-x\rangle\nonumber\\
&\le& \left\|F(x^{k_N+0.5})-\tilde{F}(x^{k_N+0.5};x^{k_N})\right\|\cdot\|x^{k_N+0.5}-x\|+ L_pD\|x^{k_N+0.5}-x^{k_N}\|^p\nonumber\\
&\le& (1+\tau) L_p D\|x^{k_N+0.5}-x^{k_N}\|^p\nonumber\\
&\le& (1+\tau) L_p D\frac{1}{N^{\frac{p}{2}}(1-\tau^2)^\frac{p}{2}}\|x^1-x^*\|^p\le \frac{(1+\tau)L_p D^{p+1}}{N^{\frac{p}{2}}(1-\tau^2)^\frac{p}{2}},\nonumber
\end{eqnarray}
which holds for all $x\in\XX$. Therefore,
\begin{eqnarray}
G(x^{k_N+0.5})=\max\limits_{x\in\XX}\,\,\langle F(x^{k_N+0.5}),x^{k_N+0.5}-x\rangle \le \frac{(1+\tau)L_p D^{p+1}}{N^{\frac{p}{2}}(1-\tau^2)^\frac{p}{2}}.\nonumber
\end{eqnarray}

We summarize the above results in the next theorem.
\begin{theorem}\label{thm:are-Minty}
    Consider the ARE update~\eqref{are-update}. Suppose that conditions~\eqref{p-lipschitz} and~\eqref{approx-oper-bound} are satisfied, and $\weaksol\neq\emptyset$. Then, the sequence produced by ARE converges at the following rate:
    \[
    \|x^{k_N+0.5}-x^{k_N}\|^2=\mathcal{O}(1/N),\quad G(x^{k_N+0.5})=\mathcal{O}(1/N^{\frac{p}{2}}).
    \]
\end{theorem}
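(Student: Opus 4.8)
The plan is to establish a single per-iteration inequality valid for every $x\in\XX$ and then specialize it to a Minty solution to obtain a telescoping bound. First I would extract the two variational characterizations produced by the update: the subproblem condition~\eqref{subVI} for $x^{k+\half}$ (testing against $x^{k+1}$), and the first-order optimality condition for $x^{k+1}$ (testing against an arbitrary $x$). Writing $\gamma_k=L_p\|x^{k+\half}-x^k\|^{p-1}$ and expanding each inner product into squared-norm differences via the polarization identity $2\langle a,b\rangle=\|a\|^2+\|b\|^2-\|a-b\|^2$ turns both conditions into three-point, telescoping-friendly forms. I would then split $\langle F(x^{k+\half}),x^{k+\half}-x^{k+1}\rangle$ into the approximation term $\langle\tilde F(x^{k+\half};x^k),\cdot\rangle$ plus an error term, control the error by Young's inequality and the approximation bound~\eqref{approx-oper-bound}, and use the identity $\|x^{k+\half}-x^k\|^{2p}/\gamma_k=\gamma_k\|x^{k+\half}-x^k\|^2$ (immediate from the definition of $\gamma_k$) so that the error is absorbed into a clean $\tau^2$ coefficient. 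After cancellation this yields, for all $x\in\XX$,
\[
\langle F(x^{k+\half}),x^{k+\half}-x\rangle+\tfrac{\gamma_k}{2}(1-\tau^2)\|x^{k+\half}-x^k\|^2\le\tfrac{\gamma_k}{2}\left(\|x^k-x\|^2-\|x^{k+1}-x\|^2\right).
\]

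The decisive step---and the only place where I depart from the monotone analysis of~\cite{huang2022approximation}---is to insert $x=x^*$ for a fixed $x^*\in\weaksol$. By the very definition of a Minty solution, $\langle F(x^{k+\half}),x^{k+\half}-x^*\rangle\ge0$, so I may drop that term from the left side; the common factor $\gamma_k/2>0$ then cancels, leaving
\[
(1-\tau^2)\|x^{k+\half}-x^k\|^2\le\|x^k-x^*\|^2-\|x^{k+1}-x^*\|^2.
\]
Assuming $\tau<1$ so that $1-\tau^2>0$, summing over $k=1,\dots,N$ telescopes the right side to at most $\|x^1-x^*\|^2$, giving $\min_{1\le k\le N}\|x^{k+\half}-x^k\|^2\le\frac{1}{N(1-\tau^2)}\|x^1-x^*\|^2=\mathcal{O}(1/N)$, the first claim.

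For the gap-function rate I would start again from the subproblem condition~\eqref{subVI}, rearranged to $\langle\tilde F(x^{k+\half};x^k),x^{k+\half}-x\rangle\le L_p\|x^{k+\half}-x^k\|^{p-1}(x^k-x^{k+\half})^\top(x^{k+\half}-x)\le L_pD\|x^{k+\half}-x^k\|^p$, where $D=\max_{x,x'\in\XX}\|x-x'\|$. Splitting $\langle F,\cdot\rangle=\langle F-\tilde F,\cdot\rangle+\langle\tilde F,\cdot\rangle$, bounding the first piece by $\tau L_pD\|x^{k+\half}-x^k\|^p$ through~\eqref{approx-oper-bound}, and taking the index $k_N$ achieving the minimum step, I obtain $G(x^{k_N+\half})\le(1+\tau)L_pD\,\|x^{k_N+\half}-x^{k_N}\|^p$. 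Raising the $\mathcal{O}(1/N)$ step bound to the power $p/2$ then yields $G(x^{k_N+\half})=\mathcal{O}(1/N^{p/2})$.

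The main obstacle I anticipate is purely bookkeeping: ensuring that the iterate-dependent factor $\gamma_k$ cancels exactly, so that no residual $\|x^{k+\half}-x^k\|^{p-1}$ weight contaminates the telescoping sum, and checking that the approximation error is absorbed with coefficient exactly $\tau^2$ rather than something larger---which is precisely what forces the hypothesis $\tau<1$. Conceptually there is no difficulty, since the Minty condition replaces monotonicity in one stroke; but the algebraic matching of the three squared-norm expansions and the Young's-inequality split must be carried out carefully for the clean inequality above to emerge.
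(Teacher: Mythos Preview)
Your proposal is correct and follows essentially the same approach as the paper's own proof: derive the per-iteration inequality~\eqref{iter} from the two optimality conditions, specialize to a Minty solution to drop the $\langle F(x^{k+\half}),x^{k+\half}-x^*\rangle$ term and cancel $\gamma_k$, telescope, and then bound $G$ via~\eqref{subVI} and the approximation error split. The bookkeeping concerns you flag (exact cancellation of $\gamma_k$, the $\tau^2$ coefficient forcing $\tau<1$) are handled in the paper exactly as you describe.
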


\begin{remark}
In the above analysis, by using the same $x^*\in\weaksol$ in~\eqref{conv-to-Minty} repetitively, it can be seen that the sequence $\{x^k\}$ converges to the specific Minty solution in terms of squared distance. On the other hand, while $\|x^{k_N+0.5}-x^{k_N}\|^2$ also converges at a rate $1/N$ by~\eqref{conv-of-iterate}, the final result guarantees a rate of convergence $1/N^{\frac{p}{2}}$ in terms of the merit function $G(x^{k_N+0.5})$, which gives an $\epsilon$-VI solution (but not necessarily an $\epsilon$-Minty solution) based on Definition~\ref{def:epsilon-sol}.  
\end{remark}

Finally, we note that if $F$ is monotone (in which case $\strongsol=\weaksol$), the convergence rate is $1/N^{\frac{p+1}{2}}$ in terms of the merit function $H(\bar x_N)$, where $\bar x_N$ is the weighted average of $x^{k+0.5}$ (see~\cite{huang2022approximation},  Theorem 3.1). It is an improved rate compared to the above result where we only assume $\weaksol\neq\emptyset$.

\subsection{Minty solutions beyond general VI}

In the previous section, we see that the existence of Minty solutions is indeed an important property to have for solving non-monotone VI. Without assuming any other conditions, it allows projection-type methods such as extra-gradient method (a first-order specialized method of ARE) to converge (to a Minty solution) with guaranteed rate. It is then natural to ask whether the same solution concept presents with similar significance in other problem classes related to VI and what are the implications of the Minty solution therein. In this section, we first discuss the role of the Minty solution in optimization. The discussion proceeds in the context of Nash games, where we present the implications of the Minty solution and its connections to the VI model.

\subsubsection{Minty solutions in optimization}

Consider the optimization problem:
\begin{eqnarray}
\min\limits_{x\in\XX}\,\, f(x),\label{opt-problem}
\end{eqnarray}
where $f(x)$ is continuously differentiable, $\XX$ is convex and closed. The local first-order optimality condition is given by:
\begin{equation}\label{first-order-optimization}
    \langle \nabla f(x^*),x-x^*\rangle\ge0,\quad\forall x\in\XX,
\end{equation}
which is equivalent to the VI model $\VI(\nabla f;\XX)$ with solution set $\sol(\VI(\nabla f;\XX))$. Now suppose a Minty solution exists for this VI model, that is, $\sol_m(\VI(\nabla f;\XX))\neq\emptyset$ and any element $x^*\in\sol_m(\VI(\nabla f;\XX))$ satisfies
\begin{equation}\label{Minty-optimization}
   \langle \nabla f(x),x-x^*\rangle\ge0,\quad\forall x\in\XX. 
\end{equation}
Note that Minty's Lemma applies here due to our assumption. Therefore, $\sol_m(\VI(\nabla f;\XX))\subseteq \sol(\VI(\nabla f;\XX))$, and any $x^*$ satisfying~\eqref{Minty-optimization} is a local first-order stationary point (\eqref{first-order-optimization} holds). The next Theorem states that a Minty solution in optimization, if exists, is in fact a global solution to the problem.

\begin{theorem}[Optimality of Minty solution]
    For the optimization problem~\eqref{opt-problem} where $f$ is continuously differentiable, $\XX$ is convex and closed. The following holds:
    \begin{eqnarray}
        &&\langle \nabla f(x),x-x^*\rangle\ge0,\quad\forall x\in\XX\nonumber\\
        &\Longrightarrow& f(x^*)\le f(x),\quad\forall x\in\XX.\nonumber
    \end{eqnarray}
    In other words, if a Minty solution exists, it is a global solution to the problem.
    \begin{proof}
        Using the following identity:
        \begin{eqnarray}
        f(x^*)-f(x)=\int\limits_{t=0}^1\nabla f(x+t(x^*-x))^\top(x^*-x)dt,\nonumber
        \end{eqnarray}
        we have for $t=1$, $\nabla f(x^*)^\top(x^*-x)\le0$ since~\eqref{first-order-optimization} holds. For $t=0$, we have $\nabla f(x)^\top(x^*-x)\le0$ due to~\eqref{Minty-optimization}. For $0<t<1$, let $\hat x=x+t(x^*-x)\in\XX$, then
        \begin{eqnarray}
        \nabla f(x+t(x^*-x))^\top(x^*-x)=\frac{1}{1-t}\nabla f(\hat x)^\top(x^*-\hat x)\le0,\quad 0<t<1,\nonumber
        \end{eqnarray}
        where the last inequality is again due to~\eqref{Minty-optimization}. Therefore, we can conclude that
        \begin{eqnarray}
        &&f(x^*)-f(x)=\int\limits_{t=0}^1\nabla f(x+t(x^*-x))^\top(x^*-x)dt\le0\nonumber\\
        &\Longrightarrow& f(x^*)\le f(x).\nonumber
        \end{eqnarray}
    \end{proof}
\end{theorem}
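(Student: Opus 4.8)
The plan is to write the objective gap $f(x^*)-f(x)$ as a line integral of the gradient along the segment from $x$ to $x^*$ and then show the integrand is nonpositive at every point of that segment, using the Minty inequality~\eqref{Minty-optimization} evaluated at the intermediate points. Fix an arbitrary $x\in\XX$; since $\XX$ is convex, the whole segment $x+t(x^*-x)$, $t\in[0,1]$, lies in $\XX$, and since $f$ is $C^1$ the fundamental theorem of calculus yields
\[
f(x^*)-f(x)=\int_0^1\nabla f(x+t(x^*-x))^\top(x^*-x)\,dt.
\]
It therefore suffices to prove that $\nabla f(x+t(x^*-x))^\top(x^*-x)\le0$ for all $t\in[0,1]$.

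The key step is to relate the integrand's direction $x^*-x$ to the displacement appearing in the Minty condition at the intermediate point. For $t\in(0,1)$ set $\hat x:=x+t(x^*-x)\in\XX$; a direct computation gives $x^*-\hat x=(1-t)(x^*-x)$, so $x^*-x$ is a strictly positive multiple of $x^*-\hat x$. Applying~\eqref{Minty-optimization} at the point $\hat x$ gives $\nabla f(\hat x)^\top(\hat x-x^*)\ge0$, equivalently $\nabla f(\hat x)^\top(x^*-\hat x)\le0$; scaling by the positive factor $1/(1-t)$ then forces $\nabla f(\hat x)^\top(x^*-x)\le0$, which is exactly the integrand at parameter $t$. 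Thus the integrand is nonpositive on the open interval.

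What remains are the two endpoints, and the $t=1$ endpoint is where I expect the only real subtlety. At $t=0$ the integrand is $\nabla f(x)^\top(x^*-x)$, which is $\le0$ by a direct application of~\eqref{Minty-optimization} at $x$ itself. At $t=1$ the point coincides with $x^*$ and the rescaling factor $1/(1-t)$ blows up, so the interior argument degenerates; here I would instead invoke the first-order stationarity of $x^*$. This is legitimate because Minty's Lemma (Lemma~\ref{lem:Minty}) guarantees that a Minty solution is also a strong solution, i.e.\ $x^*\in\sol(\VI(\nabla f;\XX))$, so~\eqref{first-order-optimization} holds and $\nabla f(x^*)^\top(x^*-x)\le0$. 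With the integrand shown to be nonpositive on all of $[0,1]$, the integral is nonpositive, giving $f(x^*)\le f(x)$; since $x\in\XX$ was arbitrary, $x^*$ is a global minimizer. The main obstacle is precisely recognizing that one cannot push the uniform interior scaling argument through $t=1$ and must separately supply the strong (VI) optimality at $x^*$ obtained from Minty's Lemma.
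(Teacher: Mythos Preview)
Your proof is correct and follows essentially the same route as the paper's: write $f(x^*)-f(x)$ as the line integral of $\nabla f$ along the segment, use the Minty inequality at $\hat x=x+t(x^*-x)$ together with the rescaling $x^*-x=\frac{1}{1-t}(x^*-\hat x)$ for $t\in(0,1)$, and handle the endpoint $t=1$ separately via the first-order stationarity of $x^*$ furnished by Minty's Lemma. The only difference is expository: you make explicit why the interior argument degenerates at $t=1$, whereas the paper simply invokes~\eqref{first-order-optimization} there without comment.
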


\begin{remark}
The Minty solution is always a global solution in optimization, provided that $f(x)$ is continuously differentiable and $\XX$ is closed convex set. However, a global solution needs not be a Minty solution. For a meaningful optimization problem where a global solution exists, a Minty solution may not exist.
\end{remark}

Consider a one-dimensional optimization problem:
\[
\min\limits_{-1\le x\le 1}\,\, -x^2,
\]
the global solutions are $x^*={-1,1}$. For $x^*=-1$:
\[
\langle \nabla f(x),x-x^*\rangle=\langle-2x,x+1\rangle<0,\,\,0<x\le1;
\]
for $x^*=1$,
\[
\langle \nabla f(x),x-x^*\rangle=\langle-2x,x-1\rangle<0,\,\,-1\le x<0.
\]
Therefore, neither of the global solutions is a Minty solution. Same as the VI model, when the objective function is convex, the set of local solutions ($\sol(\VI(\nabla f;\XX))$) coincides with the set of Minty solutions ($\sol_m(\VI(\nabla f;\XX))$), thus every global solution is a Minty solution.

\subsubsection{Minty solutions in Games}

Consider a two-player game:
\begin{eqnarray}
\left\{
\begin{array}{lcl}
     x:&& \min\limits_{x\in\XX}\,\, \theta_x(x,y)\\
    y:&& \min\limits_{y\in\YY}\,\, \theta_y(x,y),
\end{array}
\right.\label{bimatrix-game}
\end{eqnarray}
where we use $x,y$ to denote both the players and their corresponding strategies. Assume $\theta_x,\theta_y$ are both continuously differentiable for fixed $y,x$, and $\XX,\YY$ are closed convex sets. Let us first define three different notions of equilibria in this game, starting from the well-known Nash equilibrium.

\begin{definition}[Nash equilibrium (NE)]\label{def:Nash-eq}
    A solution pair $(x^*,y^*)\in\XX\times\YY$ is said to be in Nash equilibrium if and only if
    \[    \theta_x(x^*,y^*)\le\theta_x(x,y^*),\,\forall x\in\XX,\quad \theta_y(x^*,y^*)\le\theta_y(x^*,y),\,\forall y\in\YY.
    \]
\end{definition}
In other words, for player $x$ it is not possible to be better off by deviating from the Nash equilibrium strategy $x^*$ if the opponent continues to play $y=y^*$ and vice versa. For a Nash equilibrium pair $(x^*,y^*)$, $x^*/y^*$ is the global minimizer of the objective function $\theta_x(\cdot,y^*)/\theta_y(x^*,\cdot)$ for fixed $y^*/x^*$. 

\begin{definition}[Quasi-Nash equilibrium~\cite{pang2011nonconvex} (QNE)]\label{def:quasi-Nash-eq}
    A solution pair $(x^*,y^*)\in\XX\times\YY$ is said to be in quasi-Nash equilibrium if and only if
    \[
    \langle \nabla_x\theta_x(x^*,y^*),x-x^*\rangle\ge0,\,\forall x\in\XX,\quad \langle \nabla_y
    \theta_y(x^*,y^*),y-y^*\rangle\ge0,\,\forall y\in\YY.
    \]
\end{definition}
Unlike Nash equilibrium where $x^*$ and $y^*$ have to be global minimizers of their respective objective functions when the opponent plays the equilibrium strategy, a pair of quasi-Nash equilibrium only requires the first-order stationarity condition to be satisfied in their respective optimization problem. Hence quasi-Nash equilibrium can be viewed as a relaxation of Nash equilibrium.

\begin{definition}[Minty Nash equilibrium (MNE)]\label{def:minty-Nash-eq}
    A solution pair $(x^*,y^*)\in\XX\times\YY$ is said to be in Minty Nash equilibrium if and only if
    \[
    \langle \nabla_x\theta_x(x,y^*),x-x^*\rangle\ge0,\,\forall x\in\XX,\quad \langle \nabla_y
    \theta_y(x^*,y),y-y^*\rangle\ge0,\,\forall y\in\YY.
    \]
\end{definition}
The third definition given above pertains to the notion of Minty solution discussed thus far. It requires $x^*/y^*$ to be a Minty solution of $\theta_x(\cdot,y^*)/\theta_y(x^*,\cdot)$ for fixed $y^*/x^*$. By the discussion in the previous section, the set of Minty solutions is only a subset of the global solutions, therefore the Minty Nash equilibrium defines a stronger concept of equilibrium than the usual notion of Nash equilibrium.

It is straightforward to conclude the following relation among these three different notions of equilibria:
\begin{eqnarray}
\mbox{MNE}\Longrightarrow\mbox{NE}\Longrightarrow\mbox{QNE}.\nonumber
\end{eqnarray}
If the objective functions possess an additional property known as {\it block multiconvex}, i.e.\ $\theta_x(\cdot,y)$ is convex for fixed $y\in\YY$ and $\theta_y(x,\cdot)$ is convex for fixed $x\in\XX$, then the above relation becomes:
\begin{eqnarray}
\mbox{MNE}=\mbox{NE}=\mbox{QNE}.\nonumber
\end{eqnarray}

Let us now consider the connections among the above notions of equilibria to the solutions in the VI formulation of the two-player game~\eqref{bimatrix-game}:
\begin{eqnarray}
F(z):=\begin{pmatrix}
\nabla_x \theta_x(x,y)\\\nabla_y \theta_y(x,y)
\end{pmatrix},\quad z:=(x,y)^\top,\quad \ZZ:=\XX\times\YY,\label{game-VI-form}
\end{eqnarray}
which can be expressed as the VI model $\VI(F;\ZZ)$.

If $z^*=(x^*,y^*)\in\sol(\VI(F;\ZZ))$, i.e.\,
\begin{eqnarray}
\langle F(z^*),z-z^*\rangle=\langle\nabla_x\theta_x(x^*,y^*),x-x^*\rangle+\langle\nabla_y\theta_y(x^*,y^*),y-y^*\rangle\ge0,\quad\forall z\in\ZZ,\nonumber
\end{eqnarray}
it is obvious that $(x^*,y^*)$ is a pair of {quasi-Nash equilibrium} of the original two-player game by taking $x=x^*$ and $y=y^*$ in the above inequality. On the other hand, if $(x^*,y^*)$ is a pair of quasi-Nash equilibrium, then the above inequality holds trivially and $z^*=(x^*,y^*)\in\sol(\VI(F;\ZZ))$. Therefore, quasi-Nash equilibrium of the game is equivalent to the (strong) solution to the VI formulation.

Now if $z^*=(x^*,y^*)\in\sol_m(\VI(F;\ZZ))$, i.e.\,
\begin{eqnarray}
\langle F(z),z-z^*\rangle=\langle\nabla_x\theta_x(x,y),x-x^*\rangle+\langle\nabla_y\theta_y(x,y),y-y^*\rangle\ge0,\quad\forall z\in\ZZ.\label{Minty-ineq-game}
\end{eqnarray}
By taking any arbitrary $x\in\XX$ and $y=y^*$ in the above inequality, we have
\begin{eqnarray}
\langle\nabla_x\theta_x(x,y^*),x-x^*\rangle\ge0,\quad\forall x\in\XX.\nonumber
\end{eqnarray}
Similarly we have
\begin{eqnarray}
\langle\nabla_y\theta_y(x^*,y),y-y^*\rangle\ge0,\quad\forall y\in\YY.\nonumber
\end{eqnarray}
The above two inequalities combined indicates that $(x^*,y^*)$ is a pair of Minty Nash equilibrium of the original two-player game. However, we note that the opposite direction is in general not true, since a Minty solution $z^*$ to $\VI(F;\XX)$ requires the inequality~\eqref{Minty-ineq-game} to be satisfied with $\nabla_x\theta_x(x,y)/\nabla_y\theta_y(x,y)$ while the Minty Nash equilibrium only defines on $\nabla_x\theta_x(x,y^*)/\nabla_y\theta_y(x^*,y)$, in which the opponent's strategy is fixed to be the equilibrium strategy.

We can include the two solution concepts in the VI formulation, $\sol(\VI(F;\ZZ))$ and $\sol_m(\VI(F;\ZZ))$, in the previous relation and obtain
\begin{eqnarray}
\sol_m(\VI(F;\ZZ))\Longrightarrow\mbox{MNE}\Longrightarrow\mbox{NE}\Longrightarrow\mbox{QNE}=\sol(\VI(F;\ZZ)).\nonumber
\end{eqnarray}
In the case where both $\theta_x(x,y)$ and $\theta_y(x,y)$ are bock multiconvex, then 
\begin{eqnarray}
\sol_m(\VI(F;\ZZ))\Longrightarrow\mbox{MNE}=\mbox{NE}=\mbox{QNE}=\sol(\VI(F;\ZZ)).\nonumber
\end{eqnarray}
Note that it is not sufficient to state the equivalence between the Minty solution in VI and others even if the objective functions are block multiconvex. However, the above relation does offer a quick argument for the existence of Nash equilibrium for non-cooperative games where the payoff functions are block multiconvex and continuously differentiable, and the constraints are convex compact sets. Indeed, the latter two conditions are exactly given in Assumption~\ref{ass:problem-structure}, which guarantees that $\sol(\VI(F;\ZZ))\neq\emptyset$. The functions being block multiconvex indicates that NE$=\sol(\VI(F;\ZZ))$, which proves the existence of Nash equilibrium. This conclusion is summarized in the next proposition.

\begin{proposition}
    For the two-player game~\eqref{bimatrix-game}, if both $\theta_x(x,y)$ and $\theta_y(x,y)$ are block multiconvex and continuously differentiable, and $\XX,\YY$ are convex compact sets, then a Nash equilibrium exists.
\end{proposition}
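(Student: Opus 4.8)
The plan is to assemble the chain of implications already established in this subsection together with the existence guarantee of Assumption~\ref{ass:problem-structure}. First I would verify that the VI model $\VI(F;\ZZ)$ associated with the game through~\eqref{game-VI-form} satisfies the structural hypotheses of Assumption~\ref{ass:problem-structure}: since $\theta_x$ and $\theta_y$ are continuously differentiable, the operator $F(z)=(\nabla_x\theta_x(x,y),\nabla_y\theta_y(x,y))^\top$ is continuous; and since $\XX,\YY$ are non-empty convex compact sets, so is their product $\ZZ=\XX\times\YY$. Therefore $\sol(\VI(F;\ZZ))\neq\emptyset$.

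Next I would invoke the equivalence $\mbox{QNE}=\sol(\VI(F;\ZZ))$ derived above, so that $\sol(\VI(F;\ZZ))\neq\emptyset$ immediately yields the existence of a quasi-Nash equilibrium $(x^*,y^*)$. The crux of the argument is then to promote this quasi-Nash equilibrium to a genuine Nash equilibrium using block multiconvexity. For the fixed opponent strategy $y^*$, the function $\theta_x(\cdot,y^*)$ is convex on the convex set $\XX$, so its first-order stationarity condition $\langle\nabla_x\theta_x(x^*,y^*),x-x^*\rangle\ge0$ for all $x\in\XX$ is not merely necessary but also sufficient for global optimality; hence $\theta_x(x^*,y^*)\le\theta_x(x,y^*)$ for all $x\in\XX$. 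The symmetric argument applied to $\theta_y(x^*,\cdot)$ on $\YY$ gives $\theta_y(x^*,y^*)\le\theta_y(x^*,y)$ for all $y\in\YY$. These two inequalities are precisely the defining conditions of a Nash equilibrium (Definition~\ref{def:Nash-eq}), so $(x^*,y^*)$ is a Nash equilibrium and the proof is complete.

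I expect the only substantive step to be the convexity-to-global-optimality upgrade in the second paragraph, which is exactly where the block multiconvexity hypothesis enters; everything else is a direct application of Assumption~\ref{ass:problem-structure} and the equivalences already recorded. It is worth noting that this upgrade is the content of the previously stated relation $\mbox{MNE}=\mbox{NE}=\mbox{QNE}$ under block multiconvexity, so an alternative and even shorter route is to simply cite that relation: block multiconvexity forces $\mbox{NE}=\mbox{QNE}=\sol(\VI(F;\ZZ))$, and non-emptiness of the last set from Assumption~\ref{ass:problem-structure} transfers directly to $\mbox{NE}$.
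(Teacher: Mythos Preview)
Your proposal is correct and follows essentially the same approach as the paper: verify that Assumption~\ref{ass:problem-structure} applies to $\VI(F;\ZZ)$ so that $\sol(\VI(F;\ZZ))\neq\emptyset$, then use block multiconvexity to identify $\mbox{NE}=\mbox{QNE}=\sol(\VI(F;\ZZ))$. The paper's argument is exactly your ``shorter route'' in the last paragraph; your more detailed first version simply unpacks the step $\mbox{QNE}\Rightarrow\mbox{NE}$ under convexity, which the paper leaves implicit.
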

\begin{remark}
    Indeed, the celebrated Nash Theorem has shown that a mixed strategy Nash equilibrium exists in $n$-player multilinear games using Brouwer's fixed-point theorem. The above discussion only points out an alternative route for showing the same conclusion with a potentially lighter algebraic derivation. The equivalence between NE and QNE as well as between QNE and $\sol(\VI(F;\ZZ))$ is straightforward. Proving $\sol(\VI(F;\ZZ))\neq\emptyset$ can be accomplished via the same Brouwer's fixed-point theorem or degree theory \cite{facchinei2007finite}, where the details are omitted here.
\end{remark}

We note that restricting the number of players to be two in this section is only for the purpose of clear illustrations of the ideas. All the discussions can be easily extended to general $n$-player games.

\section{Algorithm-Based Conditions on VI}
\label{sec:non-mon-no-minty}

In the discussion thus far, we have focused on non-monotone VI with the condition $\weaksol\neq\emptyset$, and we show that it is sufficient for a projection-type method such as ARE to converge globally with a guaranteed rate. In this section, we continue to ask the question: Are there other sufficient conditions different from the existing ones that are able to guarantee the convergence of algorithms of certain class? It turns out that through an {\it algorithm-based} approach, it is possible to characterize structures of the VI model by deriving sufficient conditions for which the algorithms converge. These VI problems can be of special interest since they are not necessarily monotone or satisfy the Minty condition, nonetheless the algorithms converge regardless. In particular, we present conditions on VI models based on projection-type methods, analyze their convergence behavior, and provide examples of problems satisfying these conditions.

\subsection{Conditions for projection-type methods}

In order to present the conditions to be introduced later with more precise expressions, let us first define two projection-type mappings, which play a central role in these conditions since the purpose is to characterize VI problems with guaranteed convergence for projection-type methods. 

\begin{definition}[Gradient projection mapping]\label{def:gp-map}
    For a given $t>0$, assuming $\XX$ is a closed convex set, define the ``gradient projection mapping'' as \begin{eqnarray}
    M(x;t):= \proj_{\XX}(x-t F(x)).\label{gp-map}
    \end{eqnarray}
\end{definition}
Note that the term ``gradient'' follows the convention in optimization, while in general $F$ can be any vector mapping that is not necessarily a gradient mapping.

It is a well-known fact that for a fixed $t>0$, $x^*\in\XX$ is a fixed point of the gradient projection mapping $M(\cdot;t)$ if and only if $x^*\in\strongsol$. It is then natural to use a third merit function other than the (dual) gap function introduced in Section~\ref{sec:merit-function}:
\[
P(x):= \| M(x;t)-x\|^2.
\]
We summarize the above observations in the next proposition and provide a proof for completeness.
\begin{proposition}
    $x^* = M(x^*;t)$ if and only if $x^*\in \sol(\VI(F;\XX))$. Therefore, $P(x)=0$ if and only if $x\in \sol(\VI(F;\XX))$.
    \begin{proof}
        By the optimality condition of the projection operation, we have
        \begin{equation} \label{projection optimaility}
        (y-M(x;t))^\top (M(x;t)- x + t F(x)) \ge 0 ,\,\,\, \forall y \in \XX.
        \end{equation}
        If $M(x;t)\not= x$, then by setting $y=x$ in \eqref{projection optimaility} we observe
        \[
        (M(x;t)-x)^\top F(x) \le - \frac{1}{t} \, \|M(x;t)-x\|^2 < 0
        \]
        implying that $x\not \in \sol(\VI(F;\XX))$. On the other hand, if $M(x;t)=x$, then \eqref{projection optimaility} yields
        \[
        t (y-x)^\top F(x) \ge 0 ,\,\,\, \forall y \in \XX,
        \]
        and so $x\in \sol(\VI(F;\XX))$.
    \end{proof}
\end{proposition}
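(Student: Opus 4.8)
The plan is to exploit the variational characterization of the Euclidean projection onto a closed convex set, which converts the fixed-point equation $x^* = M(x^*;t)$ into a variational inequality that can be compared directly against the defining inequality of $\strongsol$. The starting point is the first-order optimality condition for the projection $M(x;t) = \proj_\XX(x - tF(x))$: for every $y \in \XX$,
\[
(y - M(x;t))^\top (M(x;t) - x + tF(x)) \ge 0.
\]
I would then establish the equivalence by splitting into the two cases $M(x;t) \neq x$ and $M(x;t) = x$, which correspond to the two directions of the ``if and only if.''

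First I would handle the case $M(x;t) \neq x$ and show that it forces $x \notin \strongsol$. Substituting the admissible test point $y = x$ into the optimality condition and isolating the term $(x - M(x;t))^\top(M(x;t) - x) = -\|M(x;t) - x\|^2$, I obtain $(M(x;t) - x)^\top F(x) \le -\frac{1}{t}\|M(x;t) - x\|^2$, which is strictly negative because $t > 0$ and $M(x;t) \neq x$. Since $M(x;t)$ itself lies in $\XX$, this exhibits a feasible test point violating the VI inequality, so $x$ cannot be a solution. Contrapositively, every $x^* \in \strongsol$ must satisfy $M(x^*;t) = x^*$.

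Next I would treat the case $M(x;t) = x$ to obtain the reverse implication. Here the optimality condition collapses to $t(y - x)^\top F(x) \ge 0$ for all $y \in \XX$; dividing by $t > 0$ recovers exactly $\langle F(x), y - x\rangle \ge 0$ for all $y \in \XX$, i.e.\ $x \in \strongsol$. Combining the two cases yields $x^* = M(x^*;t) \Longleftrightarrow x^* \in \strongsol$. The assertion about $P$ is then immediate: since $P(x) = \|M(x;t) - x\|^2$, we have $P(x) = 0$ precisely when $M(x;t) = x$, which by the equivalence just proved occurs exactly when $x \in \strongsol$.

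I do not expect a serious obstacle, since the argument is essentially an unpacking of the projection optimality condition. The only point requiring care is the first case: one must remember that the correct test point to substitute into the optimality inequality is $y = x$ (rather than an arbitrary point), and that the resulting direction $M(x;t) - x$ is itself feasible, so it legitimately certifies that $x$ violates the VI. Getting the sign of $(M(x;t) - x)^\top F(x)$ right, and hence the strict inequality, is the crux of the proof.
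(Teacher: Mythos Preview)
Your proposal is correct and follows essentially the same route as the paper's proof: start from the projection optimality condition, substitute $y=x$ in the case $M(x;t)\neq x$ to extract the strictly negative inner product that rules out $x\in\strongsol$, and observe that the condition collapses to the VI inequality when $M(x;t)=x$. If anything, you are slightly more explicit than the paper in noting that $M(x;t)\in\XX$ is the feasible test point witnessing the violation and in spelling out the $P(x)=0$ consequence.
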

\begin{remark}
    In the literature, the mapping $M(x;1)-x$ is also referred to as the ``natural map'' and solving $M(x;1)-x=0$ can be used as an equation reformulation of the VI model $\VI(F;\XX)$. In our discussion, we do not explicitly adopt the equation reformulation approach, but only use $P(x)$ as one of the measurements of convergence.
\end{remark}

In view of the gradient projection mapping defined earlier, let us also define the following ``extra-gradient projection mapping'', which expresses the extra-gradient-type methods and the sufficient condition for convergence more succinctly.
\begin{definition}[Extra-gradient projection mapping]\label{def:ex-gp-map}
    For a given $t>0$, assuming $\XX$ is a closed convex set, define the ``extra-gradient mapping'' as
    \begin{eqnarray}
    M^+(x;t):= \proj_{\XX}(x-t F(M(x;t))),\nonumber
    \end{eqnarray}
    where $M(x;t)$ is the gradient projection mapping defined in~\eqref{gp-map}.
\end{definition}

We are now ready to introduce conditions that can guarantee the convergence for different projection-type methods. These conditions provide additional characterizations of the structure of a VI problem when in general we do not assume monotonicity nor Minty condition. 

\begin{definition}\label{def:local-minty}
Condition (Local Minty).

For some fixed $t>0$ and any $x\in \XX$ there is $x^* \in \sol(\VI(F;\XX))$ such that
\[
\langle F(x),x-x^*\rangle \ge 0,
\]
and for the same $x^*$ the above inequality also holds if we replace $x$ by $M(x;t)$.
\end{definition}

\begin{definition}\label{def:local-minty-plus}
Condition (Local Minty+).

For some fixed $t>0$ and any $x\in \XX$ there is $x^* \in \sol(\VI(F;\XX))$ such that
\[
\langle F(M(x;t)),M(x;t)-x^*\rangle \ge 0,
\]
and for the same $x^*$ the above inequality also holds if we replace $x$ by $M^+(x;t)$.
\end{definition}

\begin{definition}\label{def:local-minty-star}
Condition (Local Minty*).

For some fixed $t>0$ and any $x\in \XX$ there is $x^* \in \sol(\VI(F;\XX))$ such that
\[
\langle F(x),M(x;t)-x^*\rangle \ge 0,
\]
and for the same $x^*$ the above inequality also holds if we replace $x$ by $M(x;t)$.
\end{definition}

\begin{definition}\label{def:gp}
Condition (GP).

For some fixed $\delta>0$ and $t>0$, and any $x\in \XX$ there is $x^* \in \sol(\VI(F;\XX))$ such that
\[
4(1+\delta)t \langle F(M(x;t)),M(x;t)-x^*\rangle + \| M(x;t)-x\|^2 \ge 0,
\]
and for the same $x^*$ the above inequality also holds if we replace $x$ by $M(x;t)$.
\end{definition}

\begin{definition}\label{def:gp-plus}
Condition (GP+).

For some fixed $\delta>0$ and $t>0$, and any $x\in \XX$ there is $x^* \in \sol(\VI(F;\XX))$ such that
\[
4(1+\delta)t \langle F(M(x;t)),M(x;t)-x^*\rangle + \| M(x;t)-x\|^2 \ge 0,
\]
and for the same $x^*$ the above inequality also holds if we replace $x$ by $M^+(x;t)$.
\end{definition}

\begin{definition}\label{def:gp-star}
Condition (GP*).

For some fixed $\delta>0$ and $t>0$, and any $x\in \XX$ there is $x^* \in \sol(\VI(F;\XX))$ such that
\[
2 (1+\delta) t \langle F(x),M(x;t)-x^*\rangle + \| M(x;t)-x\|^2 \ge 0 ,
\]
and for the same $x^*$ the above inequality also holds if we replace $x$ by $M(x;t)$.
\end{definition}

We make the following remarks on the conditions introduced above.
\begin{remark}{\ }

    \begin{itemize}
        \item Condition (Local Minty) (Local Minty*): Unlike other conditions that are seen before, both conditions are defined on sequences in $\XX$ rather than arbitrary points. In particular these sequences are generated by the gradient projection mapping $M(x;t)$. For any specific sequence, there is a ``local Minty solution'' for which the inequality defined in the respective condition continues to hold. 
        \item Condition (Local Minty+): When generating a sequence from the extra-gradient projection mapping $M^+(x;t)$, we immediately obtain another sequence that maps the previous sequence to their gradient projection mapping $M(x;t)$. (Local Minty+) is defined on the latter sequences.
        \item Condition (GP), (GP+), (GP*): These three conditions can be viewed as relaxations of Condition (Local Minty), (Local Minty+), and (Local Minty*), by allowing a positive term $P(x)$ in the defining inequality.
    \end{itemize}
\end{remark}

The conditions introduced above have one property in common: they all require the defining inequality to hold for the whole sequence generated from some pre-determined mapping. In other words, a solution $x^*\in\strongsol$ can ``attract'' some $x\in\XX$ following the particular sequence. Condition (Local Minty)/(Local Minty+)/(Local Minty*) (Definition~\ref{def:local-minty}, \ref{def:local-minty-plus}, and~\ref{def:local-minty-star}) guarantee the existence of such ``local Minty solution'' $x^*$ with respect to arbitrary $x\in\XX$, while Condition (GP)/(GP+)/(GP*) (Definition~\ref{def:gp}, \ref{def:gp-plus}, and~\ref{def:gp-star}) relax the previous three conditions. The term ``local'' is in contrast to the normal Minty solution, which is ``global'' since any $x^*\in\weaksol$ is able to attract {\it every} point in $\XX$ in terms of the Minty inequality $\langle F(x),x-x^*\rangle\ge0$ in the definition. This property turns out to be critical to derive these algorithm-based conditions, which helps establish convergence of projection-type methods for those problems with more general structures than the common monotonicity or Minty condition.




\begin{remark}{\ }

\begin{itemize}
\item If $F$ is monotone, then Condition (Local Minty) and Condition (Local Minty+) hold trivially (therefore so do Conditions (GP) and (GP+)).
\item If $\sol_m(\VI(F;\XX))\not=\emptyset$, then Condition (Local Minty) and Condition (Local Minty+) hold trivially (therefore so do Condition (GP) and (GP+)).
\item It is possible that $F$ is monotone but Conditions (Local Minty*) and (GP*) do not hold. Conversely, it is also possible that Conditions (Local Minty*) and (GP*) hold but $F$ is not monotone or $\weaksol=\emptyset$.
\end{itemize}
\end{remark}

The next two examples demonstrate problem instances where $\sol_m(\VI(F))=\emptyset$ while Condition (GP)/(GP+)/(GP*) still hold.

\begin{example} \label{F=-x}
Consider $\XX=[-1,1]$, $F(x)=-x$. In that case, $\sol(\VI(F;\XX))=\{-1,0,+1\}$ and $\sol_m(\VI(F;\XX))=\emptyset$. Note that
\[
M(x;t) = \left\{
\begin{array}{ll}
\min\{1,(1+t)x\}, & \mbox{ if $x>0$ }; \\
0, & \mbox{ if $x=0$ }; \\
\max\{-1,(1+t)x\}, & \mbox{ if $x<0$ }.
\end{array}
\right.
\]
In particular, for $x>0$ we choose $x^*=1$; for $x<0$ we choose $x^*=-1$; for $x=0$ we choose $x^*=0$. It is now easy to verify that Condition (GP)/(GP+)/(GP*) hold in this case. In fact, Condition (Local Minty)/(Local Minty+)/(Local Minty*) all hold in this example.
\end{example}

\begin{example}

Consider $\XX=\|x\|\le 1$, $F(x)=Qx$,  where $Q=\begin{pmatrix}-1&0\\0&1\end{pmatrix}$. In this case, $\strongsol=\left\{(1,0)^\top,(0,0)^\top,(-1,0)^\top\right\}$. None of them is a Minty solution, but we can show that $\VI(F;\XX)$ satisfies (Local Minty)/(Local Minty+)/(Local Minty+) for any fixed $t\in(0,1]$. For the first two conditions, it suffices to provide the following two observations as the proof. Also note that we can focus on $x_1\ge0$, since the behavior is symmetric for the case $x_1\le0$.
\begin{enumerate}
    \item For any $x=(x_1,x_2)^\top\in\XX$ such that $x_1\ge0$, the whole sequence generated by $M(x;t)$ and $M^+(x;t)$ will remain $x_1\ge0$. This can be easily verified.
    \item For any $x=(x_1,x_2)^\top\in\XX$ such that $x_1\ge0$, we have
    \[
    \langle F(x),x-x^*\rangle\ge0,
    \]
    for $x^*=(1,0)^\top$. Since the above inequality results in $x_2^2\ge x_1(x_1-1)$, which always holds for any $0\le x_1\le 1$.
\end{enumerate}
It remains to show that Condition (Local Minty*) also holds. Similarly, let us focus on $x_1\ge0$ and use $x^*=(1,0)^\top$. Let us denote $x^+=x-tF(x)=((1+t)x_1,(1-t)x_2)^\top$, then we have
\[
M(x;t) = \left\{
\begin{array}{ll}
x^+, & \mbox{ if $\|x^+\|\le 1$ }; \\
x^+/\|x^+\|, & \mbox{ if $\|x^+\|>1$ }.
\end{array}
\right.
\]
For the case $\|x^+\|\le 1$, the condition
\begin{equation}\label{ineq-LM*}
\langle F(x),M(x;t)-x^*\rangle\ge0
\end{equation}
reduces to
\[
(1-t)x_2^2\ge(1+t)x_1^2-x_1,
\]
where the RHS is always non-positive for $x_1\le\frac{1}{1+t}$, which is the case for $\|x^+\|\le1$. Therefore inequality~\eqref{ineq-LM*} holds. For $\|x^+\|>1$, condition~\eqref{ineq-LM*} can be reduced to
\[
(1-t)x_2^2\ge(1+t)x_1^2-x_1\cdot\|x^+\|.
\]
Since $\|x^+\|\ge (1+t)x_1$, the RHS is always non-positive and condition~\eqref{ineq-LM*} holds.

\end{example}

The next example shows that, even if $F$ is monotone, Condition (GP*) does not necessarily hold. Otherwise, since Condition (GP*) is sufficient for the gradient projection method to converge (as will be shown in the next section), 
monotonicity would have been sufficient for the convergence as well (which is not the case for the gradient projection method).

\begin{example}
    Consider $\XX=\|x\|\le 1$, $F(x)=Qx$,  where $Q=\begin{pmatrix}0&1\\-1&0\end{pmatrix}$. In this case, $\mbox{Sol(VI($F;\XX$))}=\left\{(0,0)^\top\right\}$. This problem is originated from the saddle point problem $\min\max_{\|x\|^2+\|y\|^2\le 1}xy$ and is monotone. For a small $\epsilon>0$, consider $x=(\epsilon,0)^\top$, where $F(x)=(0,-\epsilon)^\top$. 

    Consider first $t\le \frac{\sqrt{1-\epsilon^2}}{\epsilon}$. In this case, $M(x;t)=x-tF(x)=\epsilon\cdot(1,t)^\top$. Therefore, 
    \begin{eqnarray}
    2 (1+\delta) t \langle F(x),M(x;t)-x^*\rangle + \| M(x;t)-x\|^2=-2(1+\delta)t^2\epsilon^2+t^2\epsilon^2 <0.\nonumber
    \end{eqnarray}
    On the other hand, if $t>\frac{\sqrt{1-\epsilon^2}}{\epsilon}$, then $M(x;t)=(1,t)^\top\cdot(1+t^2)^{-\frac{1}{2}}$, and
    \[
    \|M(x;t)-x\|^2=\left\|\frac{(1-\epsilon\sqrt{1+t^2},t)^\top}{\sqrt{1+t^2}}\right\|^2=1+\epsilon^2-\frac{2\epsilon}{\sqrt{1+t^2}}\le 1+\epsilon^2,
    \]
    whereas
    \begin{eqnarray}
        2 (1+\delta) t \langle F(x),M(x;t)-x^*\rangle=-2(1+\delta)\epsilon\cdot t^2\cdot (1+t^2)^{-\frac{1}{2}}<-2(1+\delta)\cdot(1-\epsilon^2),\nonumber
    \end{eqnarray}
    where the last inequality we take $t=\frac{\sqrt{1-\epsilon^2}}{\epsilon}$. It is then clear that for small enough $\epsilon$, Condition (GP*) will not hold, even if $F$ is monotone.
\end{example}

\subsection{Convergence of projection-type methods}
In the previous section, we present several algorithm-based conditions that are defined for sequences generated from either the gradient projection mapping or the extra-gradient projection mapping. In this section we show how these conditions are applied in the convergence analysis for two projection-type methods, the vanilla gradient projection method and the extra-gradient method.

\subsubsection{The gradient projection method}
Consider the gradient projection method:
\begin{eqnarray}
    x^{k+1}&:=&\arg\min\limits_{x\in\XX}\langle F(x^k),x-x^k\rangle+\frac{1}{2t}\|x-x^k\|^2,\nonumber
\end{eqnarray}
or equivalently written as
\begin{eqnarray}
    x^{k+1}&:=& M(x^k;t).\nonumber
\end{eqnarray}

It is now clear why the conditions (Local Minty)/(Local Minty*) and their relaxations (GP)/(GP*) are defined for sequences generated from the gradient projection mapping. They assume that for each sequence generated by the gradient projection method there exists at least one solution $x^*\in\strongsol$ such that their respective defining inequalities continue to hold. We first derive a key intermediate inequality from the gradient projection update itself, which makes it clearer the exact condition to be used in the following convergence analysis.

\begin{lemma}\label{lem:gp}
    For the gradient projection method, we have
    \begin{equation}\label{iter-gp}
    \frac{1}{2} \|x^k-x^*\|^2 \ge \frac{1}{2} \|x^{k+1}-x^*\|^2 + t \langle F(x^k),x^{k+1}-x^*\rangle + \frac{1}{2} \| x^{k+1}-x^k\|^2 .
    \end{equation}
    for any $x^*\in\sol(\VI(F;\XX))$.
    \begin{proof}
    Since
    \begin{eqnarray}
        \langle tF(x^k)+x^{k+1}-x^k,x-x^{k+1}\rangle\ge0\quad\forall x\in\XX,\nonumber
    \end{eqnarray}
    with $x=x^*$, we have
    \begin{eqnarray}
    \langle tF(x^k),x^*-x^{k+1}\rangle\ge \frac{1}{2}\left(\|x^{k+1}-x^k\|^2+\|x^{k+1}-x^*\|^2-\|x^k-x^*\|^2\right).\nonumber
    \end{eqnarray}
    Rearranging terms gives the result.
    \end{proof}
\end{lemma}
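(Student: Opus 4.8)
The plan is to derive \eqref{iter-gp} directly from the first-order optimality condition of the proximal subproblem that defines $x^{k+1}$, without invoking any structural hypothesis (monotonicity, Minty, or the (GP)-type conditions). Since $x^{k+1}=M(x^k;t)$ is the minimizer over the convex set $\XX$ of $\langle F(x^k),x-x^k\rangle+\frac{1}{2t}\|x-x^k\|^2$, the variational characterization of a constrained minimizer gives $\langle tF(x^k)+(x^{k+1}-x^k),\,x-x^{k+1}\rangle\ge 0$ for every $x\in\XX$; this is the only ingredient from the algorithm that I need, and it is the proximal form of \eqref{projection optimaility}.

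Next I would substitute the feasible point $x=x^*$, which is legitimate because $x^*\in\sol(\VI(F;\XX))\subseteq\XX$, to obtain $\langle tF(x^k),\,x^*-x^{k+1}\rangle\ge\langle x^{k+1}-x^k,\,x^{k+1}-x^*\rangle$. It is worth emphasizing that the \emph{solution} property of $x^*$ is never used in this step; the inequality holds for any $x^*\in\XX$ and is therefore a purely algorithmic descent estimate, to be paired with a structural condition only later in the convergence analysis.

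The single computational move is to rewrite the cross term as a combination of squared distances via the three-point (polarization) identity $\langle a-b,\,a-c\rangle=\frac{1}{2}\bigl(\|a-b\|^2+\|a-c\|^2-\|b-c\|^2\bigr)$, applied with $a=x^{k+1}$, $b=x^k$, $c=x^*$. This yields $\langle x^{k+1}-x^k,\,x^{k+1}-x^*\rangle=\frac{1}{2}\bigl(\|x^{k+1}-x^k\|^2+\|x^{k+1}-x^*\|^2-\|x^k-x^*\|^2\bigr)$. Substituting into the previous display, negating so that the coefficient of $\langle F(x^k),x^{k+1}-x^*\rangle$ becomes $+t$, and isolating $\frac{1}{2}\|x^k-x^*\|^2$ on the left produces \eqref{iter-gp}.

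Because every step is either an equality or the one projection inequality, there is no genuine obstacle here; the only place warranting care is the sign bookkeeping when passing between $x^*-x^{k+1}$ and $x^{k+1}-x^*$ and when applying the polarization identity, since an inverted sign there would reverse the direction of \eqref{iter-gp}. As a sanity check I would confirm that the step term $\frac{1}{2}\|x^{k+1}-x^k\|^2$ enters with a positive coefficient and that the degenerate case $x^{k+1}=x^k=x^*$ collapses the inequality to $0\ge 0$.
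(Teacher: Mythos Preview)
Your proposal is correct and follows essentially the same approach as the paper: both start from the optimality condition $\langle tF(x^k)+x^{k+1}-x^k,\,x-x^{k+1}\rangle\ge 0$, substitute $x=x^*$, apply the three-point identity $\langle a-b,a-c\rangle=\tfrac{1}{2}(\|a-b\|^2+\|a-c\|^2-\|b-c\|^2)$, and rearrange. Your additional observation that only $x^*\in\XX$ is used (not the solution property) is accurate and worth noting.
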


In view of the inequality~\eqref{iter-gp}, it is straightforward that Conditions (Local Minty*) and (GP*) can provide a bound on $\langle F(x^k),x^{k+1}-x^*\rangle$ for the whole sequence with respect to some $x^*\in\strongsol$, thus the convergence follows. The results are summarized in the next theorem.

\begin{theorem}\label{thm:gp}
Under Condition (GP*), and assume $F$ is Lipschitz continuous with constant $L$, the gradient projection algorithm is convergent for $\VI(F;\XX)$. Moreover, 
\[
\min_{1\le k \le N} P(x^k) = O(1/N),\quad \min_{1\le k \le N} G(x^k) = O(1/N^{\frac{1}{2}}).
\]
\begin{proof}
    In view of Lemma~\ref{lem:gp} and Condition (GP*), we have:
    \begin{eqnarray}
        \frac{1}{2}\left(\|x^k-x^{*}\|^2-\|x^{k+1}-x^*\|^2\right)&\ge& t \langle F(x^k),x^{k+1}-x^*\rangle + \frac{1}{2} \| x^{k+1}-x^k\|^2\nonumber\\
        &\overset{\tiny \mbox{(GP*)}}{\ge}& -\frac{1}{2(1+\delta)}\|x^{k+1}-x^k\|^2+\frac{1}{2}\|x^{k+1}-x^k\|^2\nonumber\\
        &=& \frac{\delta}{2(1+\delta)}\|x^{k+1}-x^k\|^2.\nonumber
    \end{eqnarray}
    By Condition (GP), there exists an $x^*$ such that the above inequality holds for $k=1,...,N$. Therefore, summing up the inequality for $k=1,...,N$, we have:
    \begin{eqnarray}
        \sum\limits_{k=1}^N\|x^{k+1}-x^k\|^2\le \left(1+\frac{1}{\delta}\right)\|x^1-x^*\|^2,\nonumber
    \end{eqnarray}
    which implies
    \begin{eqnarray}
        \min\limits_{1\le k\le N}P(x^k)\le \frac{1}{N}\sum\limits_{k=1}^N\|x^{k+1}-x^k\|^2\le \frac{1}{N}\cdot\left(1+\frac{1}{\delta}\right)\|x^1-x^*\|^2=\mathcal{O}(1/N).\nonumber
    \end{eqnarray}
    Moreover, we can transform the measurement in $\min\limits_{1\le k\le N}P(x^k)$ into $\min\limits_{1\le k\le N}G(x^k)$. Since
    \begin{eqnarray}
        \langle tF(x^k)+x^{k+1}-x^k,x-x^{k+1}\rangle\ge0\quad\forall x\in\XX,\nonumber
    \end{eqnarray}
    we have
    \begin{eqnarray}
        \langle F(x^k),x^{k+1}-x\rangle\le -\frac{1}{t}(x^{k+1}-x)^\top(x^{k+1}-x^k)\le \frac{D}{t}\|x^{k+1}-x^k\|.\nonumber
    \end{eqnarray}
    Therefore,
    \begin{eqnarray}
        \langle F(x^{k+1}),x^{k+1}-x\rangle&=& \langle F(x^k),x^{k+1}-x\rangle+\langle F(x^{k+1})-F(x^k),x^{k+1}-x\rangle\nonumber\\
        &\le& \frac{D}{t}\|x^{k+1}-x^k\|+\|F(x^{k+1})-F(x^k)\|\cdot\|x^{k+1}-x\|\nonumber\\
        &\le& D\left(\frac{1}{t}+L\right)\|x^{k+1}-x^k\|.\nonumber
    \end{eqnarray}
    Define $k_N:=\arg\min\limits_{1\le k\le N}P(x^k)$, then
    \begin{eqnarray}
        \langle F(x^{k_N+1}),x^{k_N+1}-x\rangle&\le& D\left(\frac{1}{t}+L\right)\|x^{k_N+1}-x^{k_N}\|\nonumber\\
        &\le& D\left(\frac{1}{t}+L\right)\frac{1}{N^{\frac{1}{2}}}\left(1+\frac{1}{\delta}\right)^{\frac{1}{2}}\|x^1-x^*\|\nonumber\\
        &\le& D^2\left(\frac{1}{t}+L\right)\frac{1}{N^{\frac{1}{2}}}\left(1+\frac{1}{\delta}\right)^{\frac{1}{2}},\nonumber
    \end{eqnarray}
    which holds for all $x\in\XX$. This implies
    \begin{eqnarray}
        \min\limits_{1\le k\le N}G(x^k)=\max\limits_{x\in\XX}\langle F(x^{k_N}),x^{k_N}-x\rangle\le D^2\left(\frac{1}{t}+L\right)\frac{1}{N^{\frac{1}{2}}}\left(1+\frac{1}{\delta}\right)^{\frac{1}{2}}=\mathcal{O}(1/N^{\frac{1}{2}}).\nonumber
    \end{eqnarray}
\end{proof}
\end{theorem}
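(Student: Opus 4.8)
The plan is to turn the one-step identity of Lemma~\ref{lem:gp} into a genuine Lyapunov decrease by absorbing the cross term with Condition (GP*), and then telescope. First I would write $x^{k+1}=M(x^k;t)$, so that Condition (GP*) instantiated at $x=x^k$ becomes $2(1+\delta)t\langle F(x^k),x^{k+1}-x^*\rangle+\|x^{k+1}-x^k\|^2\ge0$, i.e. $t\langle F(x^k),x^{k+1}-x^*\rangle\ge-\tfrac{1}{2(1+\delta)}\|x^{k+1}-x^k\|^2$. Substituting this lower bound for the middle term of the inequality \eqref{iter-gp} and using $1-\tfrac{1}{1+\delta}=\tfrac{\delta}{1+\delta}$ yields the clean per-iteration estimate $\tfrac12\|x^k-x^*\|^2-\tfrac12\|x^{k+1}-x^*\|^2\ge\tfrac{\delta}{2(1+\delta)}\|x^{k+1}-x^k\|^2$, which is exactly the monotone decrease needed for a summability argument.

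Next I would telescope this estimate over $k=1,\dots,N$. Here the crucial point is that the \emph{same} $x^*$ must serve every iterate of the trajectory; this is precisely the content of Condition (GP*) read along the whole gradient-projection sequence (the two-point form in Definition~\ref{def:gp-star} is the base requirement, and the intended reading, emphasized in the preceding remarks, is that one $x^*\in\strongsol$ attracts the entire sequence). Granting this, the sum collapses to $\sum_{k=1}^N\|x^{k+1}-x^k\|^2\le(1+\tfrac1\delta)\|x^1-x^*\|^2$. Since $P(x^k)=\|M(x^k;t)-x^k\|^2=\|x^{k+1}-x^k\|^2$, averaging immediately gives $\min_{1\le k\le N}P(x^k)\le\tfrac1N(1+\tfrac1\delta)\|x^1-x^*\|^2=\mathcal{O}(1/N)$.

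For the gap-function rate I would convert the $P$-bound into a $G$-bound using the projection optimality condition together with Lipschitz continuity of $F$. From $\langle tF(x^k)+x^{k+1}-x^k,\,x-x^{k+1}\rangle\ge0$ one gets $\langle F(x^k),x^{k+1}-x\rangle\le\tfrac{D}{t}\|x^{k+1}-x^k\|$ by Cauchy--Schwarz and the diameter bound $D$. Writing $\langle F(x^{k+1}),x^{k+1}-x\rangle=\langle F(x^k),x^{k+1}-x\rangle+\langle F(x^{k+1})-F(x^k),x^{k+1}-x\rangle$ and using $\|F(x^{k+1})-F(x^k)\|\le L\|x^{k+1}-x^k\|$ then bounds the whole expression by $D(\tfrac1t+L)\|x^{k+1}-x^k\|$ uniformly in $x$. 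Taking the maximum over $x\in\XX$ and evaluating at the best index $k_N=\arg\min_{1\le k\le N}P(x^k)$ turns the $\mathcal{O}(1/N)$ bound on $P$ into an $\mathcal{O}(1/\sqrt N)$ bound on $G$, as claimed.

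The main obstacle, and the step deserving the most care, is not any of these routine manipulations but the legitimacy of telescoping with a single $x^*$. The literal two-point statement of Condition (GP*) only guarantees that some $x^*$ works for $x^k$ and for $M(x^k;t)$; a priori the solution furnished at $x^k$ and the one furnished at $x^{k+1}$ could differ, and distinct $x^*$'s cannot be chained in a telescoping sum. I would therefore make explicit at the outset that the condition is imposed along trajectories, so that a fixed $x^*$ certifies the inequality for every iterate $x^1,x^2,\dots$; once this is granted the remaining estimates are direct. A secondary, purely cosmetic point is the off-by-one in the index (the gap is controlled at $x^{k_N+1}$ rather than $x^{k_N}$), which affects neither the order of the rate nor the conclusion.
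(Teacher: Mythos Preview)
Your proposal is correct and follows essentially the same approach as the paper's own proof: instantiate Condition (GP*) at $x^k$ to lower-bound the cross term in \eqref{iter-gp}, telescope with a single $x^*$, and then convert the $P$-bound into a $G$-bound via the projection optimality condition plus Lipschitz continuity. The two caveats you flag (the need for one $x^*$ to serve the whole trajectory, and the off-by-one between $x^{k_N}$ and $x^{k_N+1}$) are present in the paper's argument as well, so your reading matches theirs exactly.
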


It is well-known that if $F$ is strongly monotone, gradient projection method can be guaranteed to converge with iteration complexity $\mathcal{O}\left(\frac{L^2}{\mu^2}\log\frac{1}{\epsilon}\right)$, while $F$ being merely monotone is insufficient for the convergence. On the other hand, Theorem~\ref{thm:gp} provides a different sufficient condition (GP*) such that gradient projection method converges globally in terms of $\min_{1\le k \le N} G(x^k)=G(x^{k_N})$. As shown in the previous examples, there exist problems which are not monotone but Condition (GP*) is satisfied.

\subsection{The extra-gradient method}
Consider
\[
\left\{
\begin{array}{ccl}
x^{k+0.5} &:=& \mbox{\rm arg} \min\limits_{x \in \XX} \langle F(x^k), x - x^k \rangle + \frac{1}{2t} \| x - x^k \|^2 \\
x^{k+1}   &:=& \mbox{\rm arg} \min\limits_{x \in \XX} \langle F(x^{k+0.5}), x - x^k \rangle + \frac{1}{2t} \| x - x^k \|^2 ,
\end{array}
\right.
\]
or it can be equivalently written as
\[
\left\{
\begin{array}{ccl}
x^{k+0.5} &:=& M(x^k;t) \\
x^{k+1}   &:=& M^+(x^k;t) ,
\end{array}
\right.
\]

Note that the extra-gradient method can be viewed as a special case of the ARE update discussed in Section~\ref{sec:are-conv} with $p=1$. Below we introduce a key inequality derived from the update, which also appears with a similar form in the analysis for ARE (cf.\ \eqref{iter}). Both the notation and the parameter constraint are slightly adjusted in the following lemma, therefore a proof is provided for completeness.  

\begin{lemma}\label{lem:EXgp}
    For the extra-gradient method, assume that $F$ is Lipschitz continuous with constant $L$, and $t\le \frac{1}{\sqrt{2}L}$, the following inequality holds:
    \begin{equation} \label{iter-t}
    \langle F(x^{k+0.5}),x^{k+0.5}-x\rangle + \frac{1}{4t} \|x^{k+0.5}-x^k\|^2 \le \frac{1}{2t} \left[ \|x^k-x\|^2-\|x^{k+1}-x\|^2 \right].
    \end{equation}
    \begin{proof}
        Optimality of $x^{k+0.5}$:
\begin{equation} \label{subVI2}
\langle F(x^k) + \frac{1}{t} (x^{k+\half}-x^k) , x - x^{k+\half}\rangle \ge 0,\,\, \forall x \in \XX.
\end{equation}
Substituting $x=x^{k+1}$ in \eqref{subVI2} we have
\begin{eqnarray}
&&\langle F(x^k)  ,  x^{k+1} - x^{k+\half}  \rangle \nonumber\\
&\ge& \frac{1}{t} \langle x^{k+\half} - x^k , x^{k+\half} - x^{k+1}  \rangle \nonumber \\
&=& \frac{1}{2t} \left( \|x^{k+\half} - x^k\|^2 + \|x^{k+1}-x^{k+\half}\|^2 - \| x^{k+1}-x^k\|^2 \right) . \label{tildeF-2}
\end{eqnarray}

On the other hand, by the optimality condition at $x^{k+1}$ we have
\[
\langle F(x^{k+\half})  + \frac{1}{t} (x^{k+1} - x^k) , x - x^{k+1} \rangle \ge 0 ,\mbox{ for all $x\in \XX$}.
\]
Hence,
\begin{eqnarray}
\langle F(x^{k+\half})  , x - x^{k+1} \rangle &\ge& \frac{1}{t} \langle x^{k+1} - x^k, x^{k+1} - x \rangle  \nonumber \\
&=& \frac{1}{2t} \left(  \| x^{k+1} - x\|^2 + \| x^{k+1} - x^k\|^2 - \| x^k - x\|^2 \right) ,\, \mbox{ for all $x\in \XX$}. \label{F ineq-2}
\end{eqnarray}

Continue with the above inequality, for any given $x\in \XX$ we have
\begin{eqnarray*}
& & \frac{1}{2t} \left(  \| x^{k+1} - x\|^2 + \| x^{k+1} - x^k\|^2 - \| x^k - x\|^2 \right) \\
&\overset{\eqref{F ineq-2}}{\le} & \langle F(x^{k+\half})  , x - x^{k+1} \rangle \\
&=&  \langle F(x^{k+\half})  , x - x^{k+\half} \rangle + \langle F(x^{k+\half})  , x^{k+\half} - x^{k+1} \rangle \\
&=& \langle F(x^{k+\half})  , x - x^{k+\half} \rangle + \langle F(x^{k+\half}) - F(x^k) , x^{k+\half} - x^{k+1} \rangle + \langle F(x^k)  , x^{k+\half} - x^{k+1} \rangle \\
&\le& \langle F(x^{k+\half})  , x - x^{k+\half} \rangle + \| F(x^{k+\half}) - F(x^k) \| \cdot \| x^{k+\half} - x^{k+1} \|
+ \langle F(x^k)  , x^{k+\half} - x^{k+1} \rangle \\
&\le & \langle F(x^{k+\half})  , x - x^{k+\half} \rangle + \frac{t\| F(x^{k+\half}) - F(x^k)\|^2}{2}  + \frac{ \|  x^{k+\half} - x^{k+1} \|^2}{2t}\\
& & + \langle F(x^k)  , x^{k+\half} - x^{k+1} \rangle \\
&\overset{}{\le} & \langle F(x^{k+\half})  , x - x^{k+\half} \rangle + \frac{ tL^2 \| x^{k+\half} - x^k\|^{2} }{2}  + \frac{ \|  x^{k+\half} - x^{k+1} \|^2}{2t}\\
& & + \langle F(x^k)  , x^{k+\half} - x^{k+1} \rangle .
\end{eqnarray*}
Since $t\le\frac{1}{\sqrt{2}L}$, and with \eqref{tildeF-2} we have
\begin{eqnarray*}
& & \frac{1}{2t} \left(  \| x^{k+1} - x\|^2 + \| x^{k+1} - x^k\|^2 - \| x^k - x\|^2 \right) \\
&\le& \langle F(x^{k+\half})  , x - x^{k+\half} \rangle + \frac{  \| x^{k+\half} - x^{k} \|^2}{4t} + \frac{ \|  x^{k+\half} - x^{k+1} \|^2}{2t} \\
&   & + \frac{1}{2t} \left[ - \|x^{k+\half} - x^k\|^2 - \|x^{k+1}-x^{k+\half}\|^2 + \| x^{k+1}-x^k\|^2 \right] .
\end{eqnarray*}
Canceling out terms, we simplify the above inequality into
\begin{equation} \nonumber
\langle F(x^{k+\half})  , x^{k+\half} - x \rangle + \frac{1}{4t} \| x^{k+\half} - x^{k} \|^2 \le
\frac{1}{2t} \left[ \|x^{k} - x\|^2 - \|x^{k+1}-x\|^2  \right] .
\end{equation}
    \end{proof}
\end{lemma}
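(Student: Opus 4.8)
The plan is to specialize the ARE argument that produced \eqref{iter} to the first-order case $p=1$, paying close attention to how the Lipschitz error is absorbed, since here it must be controlled by the step size $t$ rather than by a regularization coefficient $\gamma_k$.

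First I would record the two projection (optimality) conditions defining the update. Because $x^{k+\half}=M(x^k;t)$ is the minimizer of the prox subproblem built from $F(x^k)$, it satisfies
\[
\langle F(x^k)+\tfrac{1}{t}(x^{k+\half}-x^k),\,x-x^{k+\half}\rangle\ge0,\quad\forall x\in\XX,
\]
and because $x^{k+1}=M^+(x^k;t)$ is the minimizer of the prox subproblem built from $F(x^{k+\half})$, it satisfies
\[
\langle F(x^{k+\half})+\tfrac{1}{t}(x^{k+1}-x^k),\,x-x^{k+1}\rangle\ge0,\quad\forall x\in\XX.
\]

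Next I would set $x=x^{k+1}$ in the first condition and expand the inner product $\langle x^{k+\half}-x^k,\,x^{k+\half}-x^{k+1}\rangle$ via the three-point identity $2\langle a-b,a-c\rangle=\|a-b\|^2+\|a-c\|^2-\|b-c\|^2$, yielding a lower bound on $\langle F(x^k),\,x^{k+1}-x^{k+\half}\rangle$ in squared-norm terms. For an arbitrary $x\in\XX$ the second condition similarly gives a lower bound on $\langle F(x^{k+\half}),\,x-x^{k+1}\rangle$ in terms of $\|x^{k+1}-x\|^2$, $\|x^{k+1}-x^k\|^2$, and $\|x^k-x\|^2$. The central manipulation is to rewrite
\[
\langle F(x^{k+\half}),x-x^{k+1}\rangle=\langle F(x^{k+\half}),x-x^{k+\half}\rangle+\langle F(x^{k+\half})-F(x^k),x^{k+\half}-x^{k+1}\rangle+\langle F(x^k),x^{k+\half}-x^{k+1}\rangle,
\]
so that the last term is controlled by the first optimality condition and the middle cross term becomes the only source of error.

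The main obstacle is bounding this cross term and ensuring the resulting error is dominated. I would apply Cauchy--Schwarz, then Young's inequality $ab\le\tfrac{t}{2}a^2+\tfrac{1}{2t}b^2$, and finally Lipschitz continuity $\|F(x^{k+\half})-F(x^k)\|\le L\|x^{k+\half}-x^k\|$, producing the error term $\tfrac{tL^2}{2}\|x^{k+\half}-x^k\|^2$ together with a harmless $\tfrac{1}{2t}\|x^{k+\half}-x^{k+1}\|^2$. Here the step-size restriction enters decisively: $t\le\tfrac{1}{\sqrt2 L}$ is exactly equivalent to $\tfrac{tL^2}{2}\le\tfrac{1}{4t}$, so the error term may be replaced by $\tfrac{1}{4t}\|x^{k+\half}-x^k\|^2$. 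After substituting the bound for $\langle F(x^k),x^{k+\half}-x^{k+1}\rangle$, the $\|x^{k+1}-x^{k+\half}\|^2$ and $\|x^{k+1}-x^k\|^2$ contributions cancel in exactly the same pattern as in \eqref{iter}, and combining $\tfrac{1}{4t}\|x^{k+\half}-x^k\|^2$ with the leftover $-\tfrac{1}{2t}\|x^{k+\half}-x^k\|^2$ leaves $-\tfrac{1}{4t}\|x^{k+\half}-x^k\|^2$. Rearranging the surviving terms then gives precisely \eqref{iter-t}.
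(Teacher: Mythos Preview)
Your proposal is correct and follows essentially the same approach as the paper's proof: the two projection optimality conditions, the three-point identity, the same decomposition of $\langle F(x^{k+\half}),x-x^{k+1}\rangle$, Cauchy--Schwarz plus Young's inequality with weight $t$, the Lipschitz bound, and the observation that $t\le\tfrac{1}{\sqrt{2}L}$ is equivalent to $\tfrac{tL^2}{2}\le\tfrac{1}{4t}$, followed by the same cancellation pattern. There is no substantive difference.
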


One can immediately identify the connections between inequality~\eqref{iter-t} and Condition (GP+). In fact, inequality~\eqref{iter-t} plays the central role in the convergence of the extra-gradient method (or in general, the extra-gradient-type method such as ARE), and iteration complexities of different orders can be established following this inequality based on the conditions imposed on the VI model. The most conventional assumption will be the (strong) monotonicity of $F$, while in Section~\ref{sec:are-conv} it is relaxed to be Minty condition $\weaksol\neq\emptyset$. Here, Condition (GP+) provides a more direct way to guide the convergence analysis of the extra-gradient method based on inequality~\eqref{iter-t}, as summarized in the next theorem.

\begin{theorem}\label{thm:EXgp}
Under Condition (GP+), and assume $F$ is Lipschitz continuous with constant $L$, the extra-gradient method with $t\le \frac{1}{\sqrt{2}L}$ is convergent for $\VI(F;\XX)$. Moreover, 
\[
\min_{1\le k \le N} P(x^{k}) = O(1/N),\quad \min_{1\le k \le N} G(x^{k+0.5}) = O(1/N^{\frac{1}{2}}).
\]
\begin{proof}
    In view of Lemma~\ref{lem:EXgp} and Condition (GP+), we have:
    \begin{eqnarray}
         \frac{1}{2t} \left[ \|x^{k} - x^*\|^2 - \|x^{k+1}-x^*\|^2  \right]&\ge& \langle F(x^{k+\half})  , x^{k+\half} - x^* \rangle + \frac{1}{4t} \| x^{k+\half} - x^{k} \|^2\nonumber\\
         &\overset{\tiny \mbox{(GP+)}}{\ge}& -\frac{1}{4t(1+\delta)}\|x^{k+0.5}-x^k\|^2+\frac{1}{4t}\|x^{k+0.5}-x^k\|^2\nonumber\\
         &=& \frac{\delta}{1+\delta}\frac{1}{4t}\|x^{k+0.5}-x^k\|^2.\nonumber
    \end{eqnarray}
    Since Condition (GP+) also asserts that there exists an $x^*$ such that the above inequality holds for $k=1,...,N$, summing up the inequality for $k=1,...,N$ gives us:
    \begin{eqnarray}
        \sum\limits_{k=1}^N\|x^{k+0.5}-x^k\|^2\le 2\left(1+\frac{1}{\delta}\right)\|x^1-x^*\|^2,\nonumber
    \end{eqnarray}
    which implies
    \begin{eqnarray}
        \min\limits_{1\le k\le N}P(x^{k})\le \frac{1}{N}\sum\limits_{k=1}^N\|x^{k+0.5}-x^k\|^2\le \frac{2}{N}\cdot\left(1+\frac{1}{\delta}\right)\|x^1-x^*\|^2=\mathcal{O}(1/N).\nonumber
    \end{eqnarray}
    Moreover, we can transform the measurement in $\min\limits_{1\le k\le N}P(x^{k})$ into $\min\limits_{1\le k\le N}G(x^{k+0.5})$. Since
    \begin{eqnarray}
        \langle tF(x^k)+x^{k+0.5}-x^k,x-x^{k+0.5}\rangle\ge0\quad\forall x\in\XX,\nonumber
    \end{eqnarray}
    we have
    \begin{eqnarray}
        \langle F(x^k),x^{k+0.5}-x\rangle\le -\frac{1}{t}(x^{k+0.5}-x)^\top(x^{k+0.5}-x^k)\le \frac{D}{t}\|x^{k+0.5}-x^k\|.\nonumber
    \end{eqnarray}
    Therefore,
    \begin{eqnarray}
        \langle F(x^{k+0.5}),x^{k+0.5}-x\rangle&=& \langle F(x^k),x^{k+0.5}-x\rangle+\langle F(x^{k+0.5})-F(x^k),x^{k+0.5}-x\rangle\nonumber\\
        &\le& \frac{D}{t}\|x^{k+0.5}-x^k\|+\|F(x^{k+0.5})-F(x^k)\|\cdot\|x^{k+0.5}-x\|\nonumber\\
        &\le& D\left(\frac{1}{t}+L\right)\|x^{k+0.5}-x^k\|.\nonumber
    \end{eqnarray}
    Define $k_N:=\arg\min\limits_{1\le k\le N}P(x^{k})$, then
    \begin{eqnarray}
        \langle F(x^{k_N+0.5}),x^{k_N+0.5}-x\rangle&\le& D\left(\frac{1}{t}+L\right)\|x^{k_N+0.5}-x^{k_N}\|\nonumber\\
        &\le& D\left(\frac{1}{t}+L\right)\frac{\sqrt{2}}{N^{\frac{1}{2}}}\left(1+\frac{1}{\delta}\right)^{\frac{1}{2}}\|x^1-x^*\|\nonumber\\
        &\le& D^2\left(\frac{1}{t}+L\right)\frac{\sqrt{2}}{N^{\frac{1}{2}}}\left(1+\frac{1}{\delta}\right)^{\frac{1}{2}},\nonumber
    \end{eqnarray}
    which holds for all $x\in\XX$. This implies
    \begin{eqnarray}
        \min\limits_{1\le k\le N}G(x^{k+0.5})=\max\limits_{x\in\XX}\langle F(x^{k_N+0.5}),x^{k_N+0.5}-x\rangle\le D^2\left(\frac{1}{t}+L\right)\frac{\sqrt{2}}{N^{\frac{1}{2}}}\left(1+\frac{1}{\delta}\right)^{\frac{1}{2}}=\mathcal{O}(1/N^{\frac{1}{2}}).\nonumber
    \end{eqnarray}
\end{proof}
\end{theorem}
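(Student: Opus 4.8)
The plan is to mirror the proof of Theorem~\ref{thm:gp}, replacing Lemma~\ref{lem:gp} by the extra-gradient inequality Lemma~\ref{lem:EXgp} and the gradient-projection mapping by the pair $(x^{k+0.5},x^{k+1})=(M(x^k;t),M^+(x^k;t))$ that the extra-gradient method produces. First I would invoke Lemma~\ref{lem:EXgp} with the test point $x=x^*$, where $x^*\in\strongsol$ is the solution supplied by Condition (GP+). This yields
\[
\frac{1}{2t}\left[\|x^k-x^*\|^2-\|x^{k+1}-x^*\|^2\right]\ge\langle F(x^{k+0.5}),x^{k+0.5}-x^*\rangle+\frac{1}{4t}\|x^{k+0.5}-x^k\|^2.
\]
Since $x^{k+0.5}=M(x^k;t)$, Condition (GP+) rearranges to the lower bound $\langle F(x^{k+0.5}),x^{k+0.5}-x^*\rangle\ge-\frac{1}{4(1+\delta)t}\|x^{k+0.5}-x^k\|^2$, and substituting this in gives a strict per-iteration decrease of the form $\frac{1}{2t}[\|x^k-x^*\|^2-\|x^{k+1}-x^*\|^2]\ge\frac{\delta}{1+\delta}\frac{1}{4t}\|x^{k+0.5}-x^k\|^2$, where the coefficient follows from $\frac{1}{4t}-\frac{1}{4(1+\delta)t}=\frac{\delta}{1+\delta}\frac{1}{4t}$.

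The crucial structural step is the second clause of Condition (GP+): since $x^{k+1}=M^+(x^k;t)$, the same $x^*$ that certifies the inequality at $x^k$ also certifies it at $x^{k+1}$, so by induction a single $x^*$ serves the decrease for every $k=1,\dots,N$. This is exactly what makes the telescoping legitimate. Summing the per-iteration inequality over $k=1,\dots,N$ then collapses the left-hand side and, after discarding the final nonpositive term, bounds $\sum_{k=1}^N\|x^{k+0.5}-x^k\|^2\le 2(1+1/\delta)\|x^1-x^*\|^2$, whence $\min_{1\le k\le N}P(x^k)=\min_{1\le k\le N}\|x^{k+0.5}-x^k\|^2=\mathcal{O}(1/N)$.

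To pass from $P$ to the gap function $G$, I would use the optimality condition for $x^{k+0.5}$, namely $\langle tF(x^k)+x^{k+0.5}-x^k,x-x^{k+0.5}\rangle\ge0$ for all $x\in\XX$, which gives $\langle F(x^k),x^{k+0.5}-x\rangle\le\frac{D}{t}\|x^{k+0.5}-x^k\|$ with $D$ the diameter of $\XX$. Adding the correction $\langle F(x^{k+0.5})-F(x^k),x^{k+0.5}-x\rangle$ and applying $L$-Lipschitz continuity upgrades this to $\langle F(x^{k+0.5}),x^{k+0.5}-x\rangle\le D(1/t+L)\|x^{k+0.5}-x^k\|$, uniformly in $x$. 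Taking $k_N=\arg\min_{1\le k\le N}P(x^k)$ and maximizing over $x\in\XX$ then yields $\min_{1\le k\le N}G(x^{k+0.5})\le G(x^{k_N+0.5})=\mathcal{O}(1/N^{1/2})$.

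I expect the only genuinely delicate point to be the propagation argument that a single $x^*$ serves the entire trajectory; everything else reduces to the same three-point identities and Cauchy--Schwarz/Lipschitz estimates already used for Lemma~\ref{lem:EXgp} and Theorem~\ref{thm:gp}. The role of the stepsize restriction $t\le 1/(\sqrt{2}L)$ is confined to securing Lemma~\ref{lem:EXgp}, so it needs no separate treatment in the convergence argument itself.
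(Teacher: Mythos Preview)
Your proposal is correct and follows essentially the same approach as the paper: invoke Lemma~\ref{lem:EXgp} at $x=x^*$, apply Condition (GP+) to lower-bound $\langle F(x^{k+0.5}),x^{k+0.5}-x^*\rangle$, use the second clause of (GP+) to propagate a single $x^*$ along the trajectory, telescope to obtain the $\mathcal{O}(1/N)$ bound on $P$, and then convert to $G$ via the optimality condition for $x^{k+0.5}$ plus Lipschitz continuity. Your identification of the propagation of $x^*$ as the only delicate point and of the role of $t\le 1/(\sqrt{2}L)$ matches the paper exactly.
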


The convergence rate in Theorem~\ref{thm:EXgp} turns out to be the same as the rate in Theorem~\ref{thm:gp} for gradient projection method when Condition (GP*) is satisfied instead, as well as the rate in Theorem~\ref{thm:are-Minty} for ARE ($p=1$) when the Minty condition is satisfied. As discussed in the earlier examples, there exists problems where Conditions (GP+) or (GP*) are satisfied but no Minty solution exists. On the other hand, while these assumptions are able to provide alternative sufficient conditions for the convergence of certain class of algorithms, in general they can be difficult to verify {\it a priori} due to the requirement to hold for the whole sequence.


\section{Conclusion}
\label{sec:conclusion}

In this paper, we discuss sufficient conditions for projection-type methods to converge in VI problems that are not necessarily monotone. We first focus on the problem where a Minty solution exists, which is a relaxation of the monotonicity assumption. We derive the guaranteed global convergence rate for a general extra-gradient type method ARE under the Minty condition, and then we extend the discussion to properties and implications of Minty solutions in more specific problem classes such as optimization and Nash games. Finally, we present conditions on VI problems that are algorithm-based, in the sense that they are closely connected to the algorithms we are interested in (in particular, projection-type methods) and can suitably serve as sufficient conditions to guarantee the convergence. Conventionally, the algorithms are designed for problems where assumptions on the structure are made {\it a priori}, and the convergence is only guaranteed under these assumptions. In this paper, we provide an alternative aspect, by ``desinging'' conditions on the VI model such that they are sufficient to guarantee convergence of certain class of algorithms. We show that this approach is indeed capable of characterizing different classes of VI problems (potentially broader) from the existing ones such as monotone VI or VI with Minty solutions. We analyze the convergence of gradient projection method and extra-gradient method under the proposed conditions. There are still questions remaining, such as: if there are other algorithm-based conditions that can be derived for different projection-type methods or non-projection-type methods; if there exist different characterizations of these conditions such that they can be more easily verified. 
Answering these questions require some efforts in the future research.

\printbibliography

\end{document}